\documentclass[12pt]{amsart}
\usepackage{amsmath, amsthm, amssymb}
\usepackage[margin=2cm]{geometry}
\usepackage{epsfig}
\usepackage{rawfonts}
\usepackage{enumerate}
\usepackage{graphics}
\usepackage{multirow}
\usepackage{xspace}
\usepackage{graphicx}

\usepackage{pgf,tikz,pgfplots}
\usepackage{mathrsfs}
\usetikzlibrary{arrows}
\usepackage{amsmath}
\usepackage{amsfonts}
\usepackage{amssymb}
\usepackage{amsthm}
\usepackage{graphicx}
\usepackage{booktabs}
\usepackage{caption}
\usepackage{listings}
\usepackage{setspace}
\usepackage[mathscr]{eucal}
\usepackage{pgfplots}
\usepackage{hyperref}
\usepackage{wrapfig}
\usepackage{floatflt,epsfig}
\usepackage{ dsfont }
\usepackage{amscd}
\usepackage{tikz-cd}
\usepackage{fancyhdr}
\usepackage[all]{xy}
\usepackage{latexsym}
\usepackage{amscd}
\usepackage{pifont}
\usepackage{eufrak}
\usepackage{subfig}
\usepackage{easyReview}

\sloppy

\newcommand{\numberset}{\mathbb}

\newcommand{\Z}{\numberset{Z}}

\def\ZZ{{\mathbb Z}}

\newcommand{\lt}{\mathop{\rm in}\nolimits}

\newcommand{\ord}{<^{\mathsf{P}}_{\mathrm{lex}}}

\newcommand{\cC}{\mathcal{C}}

\newcommand{\cP}{\mathcal{P}}
\newcommand{\cH}{\mathcal{H}}

\newcommand{\cB}{\mathcal{B}}

\newcommand{\cW}{\mathcal{W}}

\newcommand{\cS}{\mathcal{S}}

\newcommand{\cG}{\mathcal{G}}

\theoremstyle{plain}

\theoremstyle{theorem}

\newtheorem{defn}{Definition}[section]

\newtheorem{thm}[defn]{Theorem}
\newtheorem{lemma}[defn]{Lemma}

\newtheorem{rmk}[defn]{Remark}

\newtheorem{alg}[defn]{Algorithm}
\theoremstyle{remark}

\makeatletter
\@namedef{subjclassname@2020}{\textup{2020} Mathematics Subject Classification}
\makeatother

\begin{document}
		
		\title[ON GR\"OBNER BASES AND COHEN-MACAULAY PROPERTY OF CLOSED PATH POLYOMINOES]{ON GR\"OBNER BASES AND COHEN-MACAULAY PROPERTY OF CLOSED PATH POLYOMINOES}
		
		\author{CARMELO CISTO}
		\address{Universit\'{a} di Messina, Dipartimento di Scienze Matematiche e Informatiche, Scienze Fisiche e Scienze della Terra\\
			Viale Ferdinando Stagno D'Alcontres 31\\
			98166 Messina, Italy}
		\email{carmelo.cisto@unime.it}

		\author{FRANCESCO NAVARRA}
		\address{Universit\'{a} di Messina, Dipartimento di Scienze Matematiche e Informatiche, Scienze Fisiche e Scienze della Terra\\
			Viale Ferdinando Stagno D'Alcontres 31\\
			98166 Messina, Italy}
		\email{francesco.navarra@unime.it}
		
		\author{ROSANNA UTANO}
		\address{Universit\'{a} di Messina, Dipartimento di Scienze Matematiche e Informatiche, Scienze Fisiche e Scienze della Terra\\
			Viale Ferdinando Stagno D'Alcontres 31\\
			98166 Messina, Italy}
		\email{rosanna.utano@unime.it}

      \keywords{Polyominoes, Gr\"obner bases, normal Cohen-Macaulay domain.}
		
		\subjclass[2020]{05B50, 05E40, 13C05, 13G05, 13C14}

		\dedicatory{Dedicated to the memory of Gaetana Restuccia}
		
		\begin{abstract}
 In this paper we introduce some monomial orders for the class of closed path polyominoes and we prove that the set of the generators of the polyomino ideal attached to a closed path forms the reduced Gr\"obner basis with respect to these monomial orders. It is known that the polyomino ideal attached to a closed path containing an L-configuration or a ladder of at least three steps, equivalently having no zig-zag walks, is prime. As a consequence, we obtain that the coordinate ring of a closed path having no zig-zag walks is a normal Cohen-Macaulay domain.  	
		\end{abstract}

		\maketitle
		
	\section{Introduction}

	\noindent 
	Let $X=(x_{ij})$ be an $m \times n$ matrix of indeterminates. An interesting topic in Commutative Algebra is the studying the ideal of the $t$-minors of $X$ for any integer 
	$1\leq t\leq \min\{m,n\}$. Many matematicians investigated the main algebraic properties of such ideals, called determinantal ideals. See for example \cite{conca1}, \cite{conca2}, \cite{conca3}, \cite{ladder deter varieties} about the ideals
	generated by all $t$-minors of a one or two sided ladder, \cite{adiajent1}, \cite{adjent 2}, \cite{adiajent3} about the ideals of adjacent 2-minors, \cite{2.n} about the ideals generated by an arbitrary set
	of $2$-minors in a $2\times n$ matrix, or also \cite{Ene-qureshi} about the ideals generated by 2-minors associated to a graph. For further references see \cite{Bruns Vetter}.\\	
	\noindent In 1953 S.W. Golomb coined the term \textit{polyomino} to indicate a finite collection of unitary squares joined edge by edge (\cite{golomb}). These polygons have been studied in Combinatorial Mathematics, in particular in some tiling problems of the plane. In 2012 they have been linked to Commutative Algebra by A.A. Qureshi (\cite{Qureshi}): if $\cP$ is a polyomino, $K$ is a field and $S$ is the polynomial ring over $K$ in the variables $x_a$ with $a\in V(\cP)$, set of the vertices of $\cP$, then we can associate to $\cP$ the ideal generated by all inner 2-minors of $\cP$. This ideal is called the \textit{polyomino ideal} of $\cP$ and it is denoted by $I_{\cP}$.
	\noindent Many mathematicians have taken an interest in classifying all polyominoes, for which the quotient ring $K[\cP]=S/I_{\cP}$ is a normal Cohen-Macaulay domain. \\
	The primality of $I_{\cP}$ is studied in several papers, see \cite{Cisto_Navarra}, \cite{Cisto_Navarra2}, \cite{Simple equivalent balanced}, \cite{def balanced}, \cite{Not simple with localization}, \cite{Trento}, \cite{Trento2}, \cite{Simple are prime}, \cite{Shikama}, \cite{Shikam rettangolo meno }. Moreover in \cite{Simple equivalent balanced} and \cite{Simple are prime} the authors prove that if $\cP$ is a simple polyomino then $K[\cP]$ is a normal Cohen-Macaulay domain. In other papers some new classes of non-simple polyominoes are examined. In \cite{Not simple with localization} and \cite{Shikama}, the authors show that the polyominoes obtained by removing a convex polyomino from a rectangle are prime, generalizing the same result for the rectangular polyominoes minus an internal rectangle proved in \cite{Shikama}. In \cite{Trento} the authors introduce a particular sequence of inner intervals of $\cP$, called a zig-zag walk, and they prove that $\cP$ does not contain zig-zag walks if $I_{\cP}$ is prime. It seems that the non-existence of zig-zag walks in a polyomino could characterize its primality [Conjecture 4.6, \cite{Trento}]. In \cite{Cisto_Navarra} the authors support this conjecture introducing a new class of polyominoes, called closed paths, and showing that having no zig-zag walks is a necessary and sufficient condition for their primality. An analogous result is proved in \cite{Cisto_Navarra2} for the weakly closed path polyominoes. In \cite{Trento2} the authors study the reduced Gr\"obner basis of polyomino ideals and introduce some conditions in order to the generators of $I_{\cP}$ form the reduced Gr\"obner basis with respect to some suitable degree reverse lexicographic monomial orders. Eventually, for further references about several algebraic properties of polyomino ideals we report \cite{Andrei}, \cite{Herzog rioluzioni lineari}, \cite{L-convessi} and \cite{Trento3}.\\
	\noindent In this paper we study the Gr\"obner bases of the polyomino ideal attached to a closed path and we show that there exist some monomial orders such that the set of generators of the ideal forms the reduced Gr\"obner basis with respect to these orders. In Section \ref*{Section: Introduction} we introduce the notations about polyominoes and closed paths. In Section \ref*{Section: Properties} we provide a class of monomial orderings that generalizes the class introduced in \cite{Qureshi} by Qureshi and we give some conditions on such orderings in order to the $S$-polynomial of two generators of $I_{\cP}$ attached to a collection of cells reduces to $0$ modulo the set of generators of $I_{\cP}$. In Section \ref{Section: Grobner basis of polyomino ideal of closed path} we introduce some new configurations in a closed path, in particular the $W$-pentominoes, the LD-horizontal and vertical skew tetrominoes and hexominoes and the RW-heptominoes. For each case in which one of the previous configurations is not in the closed path we provide a set of suitable vertices which allow us to define some particular suitable monomial orders. Moreover in Definition \ref{def:Yij} we provide a pseudo-algorithm to deal the general case. Finally, we prove that the set of generators of the polyomino ideal attached to a closed path is the reduced Gr\"obner basis with respect to a suitable choice of the monomial order, so the coordinate ring of a closed path having no zig-zag walks is a normal Cohen-Macaulay domain. We conclude the paper giving an example of a non-simple polyomino whose universal Gr\"obner basis is not square-free and providing some related open questions.   
	\section{Polyominoes, closed paths and polyomino ideals}\label{Section: Introduction}
\noindent Let $(i,j),(k,l)\in \Z^2$. We say that $(i,j)\leq(k,l)$ if $i\leq k$ and $j\leq l$. Consider $a=(i,j)$ and $b=(k,l)$ in $\Z^2$ with $a\leq b$. The set $[a,b]=\{(m,n)\in \Z^2: i\leq m\leq k,\ j\leq n\leq l \}$ is called an \textit{interval} of $\Z^2$. 
In addiction, if $i< k$ and $j<l$ then $[a,b]$ is a \textit{proper} interval. In such a case we say $a, b$ the \textit{diagonal corners} of $[a,b]$ and $c=(i,l)$, $d=(k,j)$ the \textit{anti-diagonal corners} of $[a,b]$. If $j=l$ (or $i=k$) then $a$ and $b$ are in \textit{horizontal} (or \textit{vertical}) \textit{position}. We denote by $]a,b[$ the set $\{(m,n)\in \Z^2: i< m< k,\ j< n< l\}$. A proper interval $C=[a,b]$ with $b=a+(1,1)$ is called a \textit{cell} of $\ZZ^2$; moreover, the elements $a$, $b$, $c$ and $d$ are called respectively the \textit{lower left}, \textit{upper right}, \textit{upper left} and \textit{lower right} \textit{corner} of $C$. The sets $\{a,c\}$, $\{c,b\}$, $\{b,d\}$ and $\{a,d\}$ are the \textit{edges} of $C$. We put $V(C)=\{a,b,c,d\}$ and $E(C)=\{\{a,c\},\{c,b\},\{b,d\},\{a,d\}\}$. \\
Let $\cS$ be a non-empty collection of cells in $\Z^2$. The set of the vertices and the edges of $\cS$ are respectively $V(\cS)=\bigcup_{C\in \cS}V(C)$ and $E(\cS)=\bigcup_{C\in \cS}E(C)$. If $C$ and $D$ are two distinct cells of $\cS$, then a \textit{walk} from $C$ to $D$ in $\cS$ is a sequence $\cC:C=C_1,\dots,C_m=D$ of cells of $\ZZ^2$ such that $C_i \cap C_{i+1}$ is an edge of $C_i$ and $C_{i+1}$ for $i=1,\dots,m-1$. In addition, if $C_i \neq C_j$ for all $i\neq j$, then $\cC$ is called a \textit{path} from $C$ to $D$. We say that $C$ and $D$ are connected in $\cS$ if there exists a path of cells in $\cS$ from $C$ to $D$. 
A \textit{polyomino} $\cP$ is a non-empty, finite collection of cells in $\Z^2$ where any two cells of $\cP$ are connected in $\cP$. For instance, see Figure \ref{Figura: Polimino introduzione}.
\begin{figure}[h]
	\centering
	\includegraphics[scale=0.5]{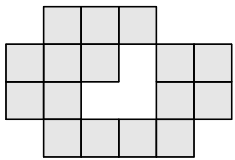}
	\caption{A polyomino.}
	\label{Figura: Polimino introduzione}
\end{figure}

	 \noindent We say that a polyomino $\cP$ is \textit{simple} if for any two cells $C$ and $D$ not in $\cP$ there exists a path of cells not in $\cP$ from $C$ to $D$. A finite collection of cells $\cH$ not in $\cP$ is a \textit{hole} of $\cP$ if any two cells of $\cH$ are connected in $\cH$ and $\cH$ is maximal with respect to set inclusion. For example, the polyomino in Figure \ref{Figura: Polimino introduzione} is not simple with an hole. Obviously, each hole of $\cP$ is a simple polyomino and $\cP$ is simple if and only if it has not any hole.\\
	 Consider two cells $A$ and $B$ of $\Z^2$ with $a=(i,j)$ and $b=(k,l)$ as the lower left corners of $A$ and $B$ and $a\leq b$. A \textit{cell interval} $[A,B]$ is the set of the cells of $\Z^2$ with lower left corner $(r,s)$ such that $i\leqslant r\leqslant k$ and $j\leqslant s\leqslant l$. If $(i,j)$ and $(k,l)$ are in horizontal (or vertical) position, we say that the cells $A$ and $B$ are in \textit{horizontal} (or \textit{vertical}) \textit{position}.\\
	 Let $\cP$ be a polyomino. Consider  two cells $A$ and $B$ of $\cP$ in vertical or horizontal position. 	 
	 The cell interval $[A,B]$, containing $n>1$ cells, is called a
	 \textit{block of $\cP$ of rank n} if all cells of $[A,B]$ belong to $\cP$. The cells $A$ and $B$ are called \textit{extremal} cells of $[A,B]$. Moreover, a block $\cB$ of $\cP$ is \textit{maximal} if there does not exist any block of $\cP$ which contains properly $\cB$. It is clear that an interval of $\ZZ^2$ identifies a cell interval of $\ZZ^2$ and vice versa, hence we can associated to an interval $I$ of $\ZZ^2$ the corresponding cell interval denoted by $\cP_{I}$. A proper interval $[a,b]$ is called an \textit{inner interval} of $\cP$ if all cells of $\cP_{[a,b]}$ belong to $\cP$. An interval $[a,b]$ with $a=(i,j)$, $b=(k,j)$ and $i<k$ is called a \textit{horizontal edge interval} of $\cP$ if the sets $\{(\ell,j),(\ell+1,j)\}$ are edges of cells of $\cP$ for all $\ell=i,\dots,k-1$. In addition, if $\{(i-1,j),(i,j)\}$ and $\{(k,j),(k+1,j)\}$ do not belong to $E(\cP)$, then $[a,b]$ is called a \textit{maximal} horizontal edge interval of $\cP$. We define similarly a \textit{vertical edge interval} and a \textit{maximal} vertical edge interval. \\
	Following \cite{Trento} we recall the definition of a \textit{zig-zag walk} of $\cP$. A zig-zag walk of $\cP$ is a sequence $\cW:I_1,\dots,I_\ell$ of distinct inner intervals of $\cP$ where, for all $i=1,\dots,\ell$, the interval $I_i$ has either diagonal corners $v_i$, $z_i$ and anti-diagonal corners $u_i$, $v_{i+1}$ or anti-diagonal corners $v_i$, $z_i$ and diagonal corners $u_i$, $v_{i+1}$, such that:
	\begin{enumerate}
		\item $I_1\cap I_\ell=\{v_1=v_{\ell+1}\}$ and $I_i\cap I_{i+1}=\{v_{i+1}\}$, for all $i=1,\dots,\ell-1$;
		\item $v_i$ and $v_{i+1}$ are on the same edge interval of $\cP$, for all $i=1,\dots,\ell$;
		\item for all $i,j\in \{1,\dots,\ell\}$ with $i\neq j$, there exists no inner interval $J$ of $\cP$ such that $z_i$, $z_j$ belong to $V(J)$.
	\end{enumerate}

\noindent In according to \cite{Cisto_Navarra}, we recall the definition of a \textit{closed path polyomino}, and the configuration of cells characterizing its primality. We say that a polyomino $\cP$ is a \textit{closed path} if it is a sequence of cells $A_1,\dots,A_n, A_{n+1}$, $n>5$, such that:
	\begin{enumerate}
		\item $A_1=A_{n+1}$;
		\item $A_i\cap A_{i+1}$ is a common edge, for all $i=1,\dots,n$;
		\item $A_i\neq A_j$, for all $i\neq j$ and $i,j\in \{1,\dots,n\}$;
		\item For all $i\in\{1,\dots,n\}$ and for all $j\notin\{i-2,i-1,i,i+1,i+2\}$ then $V(A_i)\cap V(A_j)=\emptyset$, where $A_{-1}=A_{n-1}$, $A_0=A_n$, $A_{n+1}=A_1$ and $A_{n+2}=A_2$. 
	\end{enumerate}
\begin{figure}[h]
 	\centering
 	\includegraphics[scale=0.4]{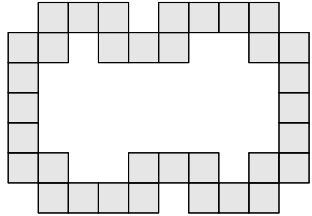}
 	\caption{An example of a closed path.}
\end{figure}
 	A path of five cells $C_1, C_2, C_3, C_4, C_5$ of $\cP$ is called an \textit{L-configuration} if the two sequences $C_1, C_2, C_3$ and $C_3, C_4, C_5$ go in two orthogonal directions. A set $\cB=\{\cB_i\}_{i=1,\dots,n}$ of maximal horizontal (or vertical) blocks of rank at least two, with $V(\cB_i)\cap V(\cB_{i+1})=\{a_i,b_i\}$ and $a_i\neq b_i$ for all $i=1,\dots,n-1$, is called a \textit{ladder of $n$ steps} if $[a_i,b_i]$ is not on the same edge interval of $[a_{i+1},b_{i+1}]$ for all $i=1,\dots,n-2$. For instance, in Figure \ref{Figura:L conf + Ladder} there is a closed path having an L-configuration and a ladder of three steps. We recall that a closed path has no zig-zag walks if and only if it contains an L-configuration or a ladder of at least three steps (see \cite{Cisto_Navarra}).
 \begin{figure}[h]
 	\centering
 	\includegraphics[scale=0.7]{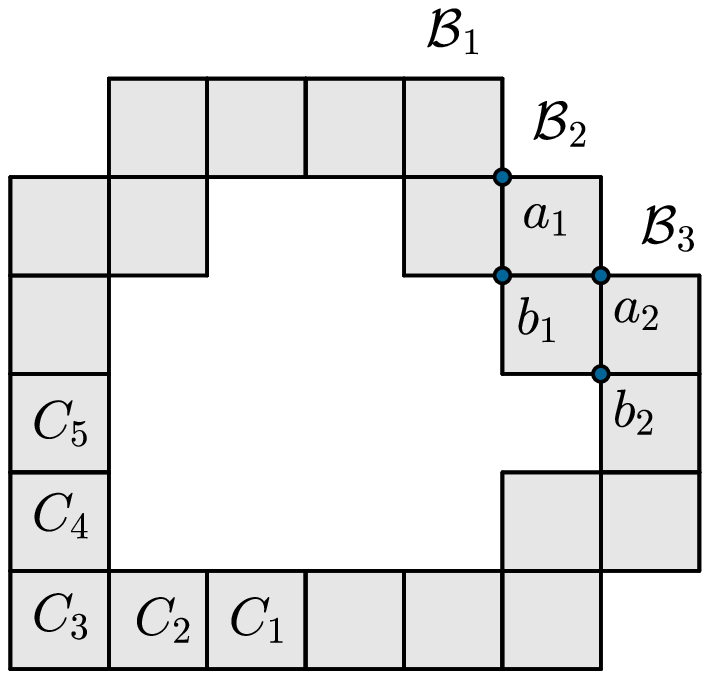}
 	\caption{}
 	\label{Figura:L conf + Ladder}
 \end{figure}

 	\noindent Let $\cP$ be a polyomino. We set $S=K[x_v| v\in V(\cP)]$, where $K$ is a field. If $[a,b]$ is an inner interval of $\cP$, with $a$,$b$ and $c$,$d$ respectively diagonal and anti-diagonal corners, then the binomial $x_ax_b-x_cx_d$ is called an \textit{inner 2-minor} of $\cP$. We define $I_{\cP}$ as the ideal in $S$ generated by all the inner 2-minors of $\cP$ and we call it the \textit{polyomino ideal} of $\cP$. We set also $K[\cP] = S/I_{\cP}$, which is the \textit{coordinate ring} of $\cP$. Eventually, we recall that if $\cP$ is a closed path then having no zig-zag walks, equivalently $\cP$ contains an L-configuration or a ladder of at least three steps, is a necessary and sufficient condition in order to $K[\cP]$ is a domain by [\cite{Cisto_Navarra}, Theorem 6.2].  

\section{Preliminary results}
\label{Section: Properties}
\noindent Let $\cP$ be a non-empty collection of cells with $V(\cP)=\{a_{1},\ldots,a_n\}
$. We define a \emph{$\mathsf{P}$-order} to be a total order on the set $V(\cP)$. Observe that the monomial orderings defined in \cite{Trento2} and \cite{Qureshi} are induced by specific $\mathsf{P}$-orders, in particular we recall the monomial order $<^1$ introduced in \cite{Qureshi}, which will be useful for this paper: we say the $a<^1 b$ if and only if, for $a = ( i , j )$ and $b = ( k , l )$, $i < k$, or $i = k$ and $j < l$.\\
If $<^{\mathsf{P}}$ is a $\mathsf{P}$-order,  we denote by $<^{\mathsf{P}}_{\mathrm{lex}}$ the lexicographic order induced by $<^\mathsf{P}$ on $S=K[x_v \mid v\in V(\cP)]$, that is the lexicographic order induced by the total order on the variables defined in the following way: $x_{a_{i}}\ord x_{a_j}$ if and only if $a_i <^{\mathsf{P}}a_j$ for $i,j\in \{1,\ldots,n\}$. If $f\in S$, we denote by $\lt(f)$ the leading term of $f$ with respect to $\ord$.\\
Let $f,g\in I_{\cP}$, we denote by $S(f,g)$ the $S$-polynomial of $f,g$ with respect to $\ord$. Let $\mathcal{G}$ be the set of all inner 2-minors of $\cP$ (that is the set of generators of $I_{\cP}$). We want to study some conditions on $<^{\mathsf{P}}$ in order to $S(f,g)$ reduces to 0 modulo $\mathcal{G}$. \\
First of all observe that if $[a,b]$ and $[\alpha,\beta]$ are two inner intervals and $[a,b]\cap [\alpha,\beta]$ does not contain any corner of $[a,b]$ and $[\alpha,\beta]$, then $\gcd(\lt(f_{a,b}),\lt(f_{\alpha,\beta}))=1$ so $S(f_{a,b},f_{\alpha,\beta})$ reduces to 0. So it suffices to study the remaining cases.\\
\noindent In the remainder of this section the inner intervals $[a,b]$ and $[\alpha,\beta]$ have respectively $c,d$ and $\gamma,\delta$ as anti-diagonal corners, as in figure~\ref{intervalli}.

\begin{figure}[h]
	\includegraphics[scale=0.6]{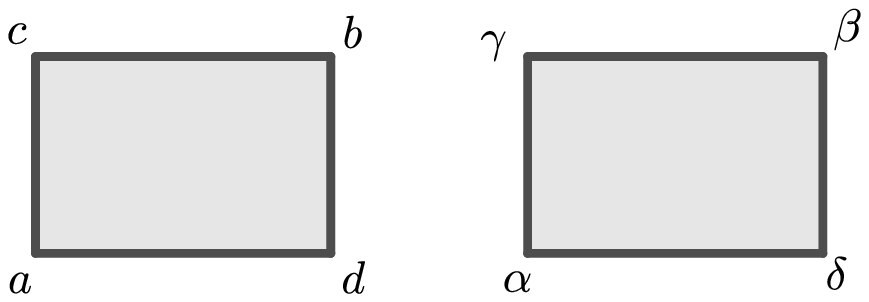}
	\caption{}
	\label{intervalli}
\end{figure}

\noindent In the following lemmas we examine all possible cases in which $|\{a,b,c,d\}\cap \{\alpha,\beta,\gamma,\delta\}|$ is equal to $1$ or $2$.

\begin{lemma}\label{intervalli2}
	Let $\cP$ be a collection of cells and $[a,b]$ and $[\alpha,\beta]$ be two inner intervals such that $|\{a,b,c,d\}\cap \{\alpha,\beta,\gamma,\delta\}|=2$. Then $S(f_{a,b},f_{\alpha,\beta})$ reduces to 0 modulo $\mathcal{G}$ with respect to $\ord$ for any $\mathsf{P}$-order. 
\end{lemma}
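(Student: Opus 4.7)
The plan is to proceed by a geometric case analysis of how two distinct inner intervals $[a,b]$ and $[\alpha,\beta]$ can share exactly two corners. First, if the two shared points do not lie on a common horizontal or vertical line, they must be diagonal corners of both intervals; a short case-check on whether each pair plays the role of diagonal or anti-diagonal in each interval either forces $[a,b]=[\alpha,\beta]$ or produces a direct contradiction with the lower-left/upper-right ordering of diagonal corners. Hence up to the obvious symmetries (horizontal vs.\ vertical, left vs.\ right) one is reduced to two configurations: (A) the intervals are adjacent along a common edge with disjoint interiors, e.g.\ $b=\gamma$ and $d=\alpha$, so that the union equals $[a,\beta]$; and (B) one interval is strictly contained in the other while sharing one of its edges, e.g.\ $a=\alpha$ and $c=\gamma$ with $[a,b]\subsetneq[\alpha,\beta]$.

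In case (A) the key observation is that $[a,\beta]$ is itself an inner interval of $\cP$, since every cell of $\cP_{[a,\beta]}$ lies either in $\cP_{[a,b]}$ or in $\cP_{[\alpha,\beta]}$, both of which are inner by hypothesis; hence $f_{a,\beta}\in\mathcal{G}$. A direct computation then yields the identity
\[
x_{\delta}\,f_{a,b}+x_{a}\,f_{\alpha,\beta}=x_{d}\,f_{a,\beta}.
\]
From this, whichever monomials the $\mathsf{P}$-order selects as leading terms of $f_{a,b}$ and $f_{\alpha,\beta}$, one checks that either $\gcd(\lt(f_{a,b}),\lt(f_{\alpha,\beta}))=1$ (so $S(f_{a,b},f_{\alpha,\beta})$ reduces to $0$ automatically) or $S(f_{a,b},f_{\alpha,\beta})=\pm x_{p}\,f_{a,\beta}$ for some $p\in\{a,b,c,d,\alpha,\beta,\gamma,\delta\}$, and this is a one-step reduction to $0$ modulo $\mathcal{G}$.

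Case (B) is handled analogously: $[d,\beta]$ is an inner interval of $\cP$ because it is contained in the inner interval $[\alpha,\beta]$, so $f_{d,\beta}\in\mathcal{G}$, and the corresponding identity
\[
x_{\beta}\,f_{a,b}-x_{b}\,f_{\alpha,\beta}=-x_{c}\,f_{d,\beta}
\]
shows that for every admissible choice of leading terms the $S$-polynomial is $\pm x_{p}\,f_{d,\beta}$, hence reduces to $0$ modulo $\mathcal{G}$. The remaining sub-cases (shared horizontal edge, opposite containment direction) are perfectly symmetric to these two. The main obstacle is essentially the book-keeping of the several possible assignments of leading terms dictated by $\ord$; the saving feature is that each of the two polynomial identities above controls all the corresponding sub-cases simultaneously, since scaling $f_{a,b}$ or $f_{\alpha,\beta}$ by a monomial does not alter which of their two monomials is the leading one with respect to $\ord$.
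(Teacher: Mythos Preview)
Your proof is correct and follows essentially the same route as the paper's: both arguments compute the $S$-polynomial and exhibit it as $\pm x_p f$ for an auxiliary inner $2$-minor $f$ (namely $f_{a,\beta}$ in the adjacent case), then observe that multiplying by a monomial preserves leading terms so the reduction is immediate. The paper treats only the adjacent configuration $\alpha=d$, $\gamma=b$ explicitly and dismisses the remaining cases as ``similar''; you are more careful in justifying why, up to symmetry, only the adjacent case (A) and the nested case (B) can occur, and you handle (B) explicitly via $f_{d,\beta}$---this extra bookkeeping is a genuine improvement in completeness, though not a different method. One minor expository point: the single identity you display in case (A) literally covers only one of the two nontrivial leading-term assignments; the other requires the companion identity $-x_\beta f_{a,b}-x_c f_{\alpha,\beta}=-x_b f_{a,\beta}$, which your phrase ``one checks'' implicitly acknowledges but which might be worth stating.
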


\begin{proof}
	We may assume that $\alpha=d$ and $\gamma=b$, because the other cases can be discussed similarly.
	If $\gcd(\lt(f_{a,b}),\lt(f_{\alpha,\beta}))=1$, then there is nothing to prove, so we have to distinguish the following cases.\\
	\underline{First case:} $\lt(f_{a,b})=x_a x_b$ and $\lt(f_{\alpha,\beta})=-x_b x_\delta$. In such a case $S(f_{a,b},f_{\alpha,\beta})=-x_\delta x_d x_c+x_a x_\beta x_d= x_d(x_a x_\beta-x_c x_\delta)$. Observe that $f_{a,\beta}\in I_{\cP}$. If $\lt(S(f_{a,b},f_{\alpha,\beta}))=x_a x_\beta x_d$ then $\lt(f_{a,\beta})=x_a x_\beta$, so $S(f_{a,b},f_{\alpha,\beta})$ reduces to 0. If $\lt(S(f_{a,b},f_{\alpha,\beta}))=-x_\delta x_d x_c$ then $\lt(f_{a,\beta})=-x_c x_\delta$, so $S(f_{a,b},f_{\alpha,\beta})$ reduces to 0 also in this case. \\
	\underline{Second case:} $\lt(f_{a,b})=-x_d x_c$ and $\lt(f_{\alpha,\beta})=x_\beta x_d$. In such a case $S(f_{a,b},f_{\alpha,\beta})=-x_\beta x_a x_b+x_c x_b x_\delta= -x_b(x_a x_\beta-x_c x_\delta)$. As in the first case,  if $\lt(S(f_{a,b},f_{\alpha,\beta}))=-x_\beta x_a x_b$ then $\lt(f_{a,\beta})=x_a x_\beta$, so $S(f_{a,b},f_{\alpha,\beta})$ reduces to 0. If $\lt(S(f_{a,b},f_{\alpha,\beta}))=+x_c x_b x_\delta$ then $\lt(f_{a,\beta})=-x_c x_\delta$, so $S(f_{a,b},f_{\alpha,\beta})$ reduces to 0 also in this case.
\end{proof}

%
%
%

\begin{lemma}
	Let $\cP$ be a collection of cells and $[a,b]$ and $[\alpha,\beta]$ be two inner intervals with $\beta=b$ and $\gamma\in ]c,b[$ (see Figure~\ref{img_intervalli4}(A)). Let $h$ be the vertex such that $[h,b]$ is the inner interval having $d,\gamma$ as anti-diagonal corner and $r$ be the vertex such that $[r,h]$ is the interval having $a,\alpha$ as anti-diagonal corner. Let $<^\mathsf{P}$ be a $\mathsf{P}$-order on $V(\cP)$ and suppose that $\gcd(\lt(f_{a,b}),\lt(f_{\alpha,\beta}))\neq 1$. Then $S(f_{a,b},f_{\alpha,\beta})$ reduces to 0 modulo $\mathcal{G}$ with respect to $\ord$ if and only if one of the following conditions occurs:
	\begin{enumerate}
		\item $x_a x_\gamma x_\delta \ord x_\alpha x_c x_d$ and in addiction $h,\delta <^{\mathsf{P}}\alpha$ or $h,\delta <^{\mathsf{P}} d$;
		\item $x_a x_\gamma x_\delta \ord x_\alpha x_c x_d$, $\{r,h,a,\alpha\}$ is the set of vertices of an inner interval of $\cP$ and in addiction $r,\gamma <^{\mathsf{P}}\alpha$ or $r,\gamma <^{\mathsf{P}} c$;
		\item $x_\alpha x_c x_d \ord x_a x_\gamma x_\delta$ and in addiction $h,c <^{\mathsf{P}}a$ or $h,c <^{\mathsf{P}} \gamma$;
		\item $x_\alpha x_c x_d \ord x_a x_\gamma x_\delta$, $\{r,h,a,\alpha\}$ is the set of vertices of an inner interval of $\cP$ and in addiction $r,d <^{\mathsf{P}}a$ or $r,d <^{\mathsf{P}} \delta$. 
	\end{enumerate}
The same characterization holds for $S(f_{c,d},f_{\gamma,\delta})$, $S(f_{b,a},f_{b,\alpha})$ and $S(f_{d,c},f_{\delta,\gamma})$ considering all the rotations of the described configuration (see respectively Figure~\ref{img_intervalli4}(B), Figure~\ref{img_intervalli4}(C) and Figure~\ref{img_intervalli4}(D)). 	
	\label{intervalli4}
\end{lemma}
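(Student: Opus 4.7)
My plan is as follows. I first compute $S(f_{a,b},f_{\alpha,\beta})$ directly. Since $\beta=b$ is the shared variable, the only leading-term configuration producing a non-coprime situation (beyond what is covered by the coprime remark preceding the lemma) is $\lt(f_{a,b})=x_ax_b$ and $\lt(f_{\alpha,\beta})=x_\alpha x_b$, which yields
\[
S(f_{a,b},f_{\alpha,\beta}) \;=\; x_a x_\gamma x_\delta \;-\; x_\alpha x_c x_d.
\]
The crucial observation is that $[a,\gamma]\subset[a,b]$ and $[\alpha,d]\subset[\alpha,\beta]$ are automatically inner intervals of $\cP$, providing the always-available generators $f_{a,\gamma}=x_ax_\gamma-x_cx_h$ and $f_{\alpha,d}=x_\alpha x_d-x_hx_\delta$. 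A direct expansion then yields the canonical decomposition
\[
S \;=\; x_\delta\, f_{a,\gamma} \;-\; x_c\, f_{\alpha,d}.
\]

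For cases (1) and (3) I verify this is a standard representation. The only monomial appearing in the expansion outside $\{x_a x_\gamma x_\delta,\,x_\alpha x_c x_d\}$ is $x_cx_hx_\delta$, so standardness amounts to $x_cx_hx_\delta\leq\lt(S)$ with respect to $\ord$. In case~(1), where $\lt(S)=x_\alpha x_c x_d$, this reduces to the lex comparison $x_hx_\delta\leq x_\alpha x_d$, and the sub-condition ``$h,\delta<^{\mathsf{P}}\alpha$ or $h,\delta<^{\mathsf{P}} d$'' gives it immediately: the selected vertex on the right is strictly above both $h$ and $\delta$, and so decides the lex comparison in its favor. Case~(3), where $\lt(S)=x_a x_\gamma x_\delta$, is entirely symmetric via the comparison $x_cx_h\leq x_ax_\gamma$.

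For cases (2) and (4), the hypothesis that $\{r,h,a,\alpha\}$ spans an inner interval of $\cP$ implies that $[r,b]$, together with its sub-rectangles $[r,\gamma]$ and $[r,d]$, are also inner intervals, since $[r,b]$ decomposes cell-wise as $[r,h]\cup[a,b]\cup[\alpha,\beta]$. This supplies the alternative generators $f_{r,\gamma}=x_rx_\gamma-x_cx_\alpha$ and $f_{r,d}=x_rx_d-x_ax_\delta$, and a direct check gives the second canonical decomposition
\[
S \;=\; x_d\, f_{r,\gamma} \;-\; x_\gamma\, f_{r,d}.
\]
Here the only ``extra'' monomial is $x_rx_\gamma x_d$, and the hypotheses on $r,\gamma$ in (2) or $r,d$ in (4) give the required lex bound by the same top-variable argument.

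For the necessity direction I would run the converse analysis. Any standard representation of $S$ with respect to $\ord$ must use generators whose leading monomials divide a term of $S$; enumerating the inner intervals of $\cP$ with the relevant vertex support, the available generators come from $[a,b]$, $[\alpha,\beta]$, $[h,b]$, $[a,\gamma]$, $[\alpha,d]$, and—when $[r,h]$ is inner—also $[r,h]$, $[r,b]$, $[r,\gamma]$, $[r,d]$. A bookkeeping argument shows that the only $2$-term combinations of these that actually equal $S$ are the two decompositions above, and any longer representation ultimately reduces to one of them, forcing one of the conditions (1)--(4) on the $\mathsf{P}$-order. The rotated statements for $S(f_{c,d},f_{\gamma,\delta})$, $S(f_{b,a},f_{b,\alpha})$ and $S(f_{d,c},f_{\delta,\gamma})$ follow by applying the argument to the rotated/reflected configurations. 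I expect the main obstacle to be this necessity direction, specifically ruling out combined representations involving $f_{r,h}$ or $f_{h,b}$ that might appear to bypass the two canonical reductions—one must verify that any such combination still produces a critical monomial ($x_cx_hx_\delta$ or $x_rx_\gamma x_d$) whose lex bound is precisely one of the stated sub-conditions.
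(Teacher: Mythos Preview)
Your sufficiency argument is correct and matches the paper's: the paper performs the first division by $f_{\alpha,d}$ (case~(1)) or by $f_{r,\gamma}$ (case~(2)), leaving $x_\delta f_{a,\gamma}$ or $-x_\gamma f_{r,d}$ respectively, which are exactly your two decompositions $S=x_\delta f_{a,\gamma}-x_c f_{\alpha,d}$ and $S=x_d f_{r,\gamma}-x_\gamma f_{r,d}$; your ``extra monomial'' bound $x_c x_h x_\delta\le\lt(S)$ (resp.\ $x_r x_\gamma x_d\le\lt(S)$) is equivalent to the paper's requirement $\lt(f_{\alpha,d})=x_\alpha x_d$ (resp.\ $\lt(f_{r,\gamma})=-x_\alpha x_c$).

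Your necessity sketch, however, has a conceptual slip. It is not true that every generator appearing in a standard representation of $S$ must have leading monomial dividing a term of $S$; what is true is that \emph{at least one} does---the one whose leading term survives to produce $\lt(S)$. The paper exploits precisely this. In the case $x_a x_\gamma x_\delta\ord x_\alpha x_c x_d$, the degree-two divisors of $\lt(S)=x_\alpha x_c x_d$ are $x_\alpha x_d$, $x_\alpha x_c$, and $x_c x_d$; the last cannot be the leading term of any generator because the unique generator containing it is $f_{a,b}$, whose leading term is $x_a x_b$ by hypothesis; and each of $x_\alpha x_d$, $x_\alpha x_c$ occurs in exactly one generator ($f_{\alpha,d}$, and---when $[r,h]$ is inner---$f_{r,\gamma}$), so the first reduction step already forces condition~(1) or~(2). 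After that step the remainder is a monomial multiple of a single generator, hence reduces to $0$ in one more step, and there is no room for the ``combined representations involving $f_{r,h}$ or $f_{h,b}$'' you anticipate. Replacing your vertex-support enumeration by this divisor-of-$\lt(S)$ argument closes the gap without any further bookkeeping; the case $x_\alpha x_c x_d\ord x_a x_\gamma x_\delta$ and the rotated configurations are handled identically.
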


\begin{figure}[h]
\centering
	\subfloat[]{\includegraphics[scale=0.65]{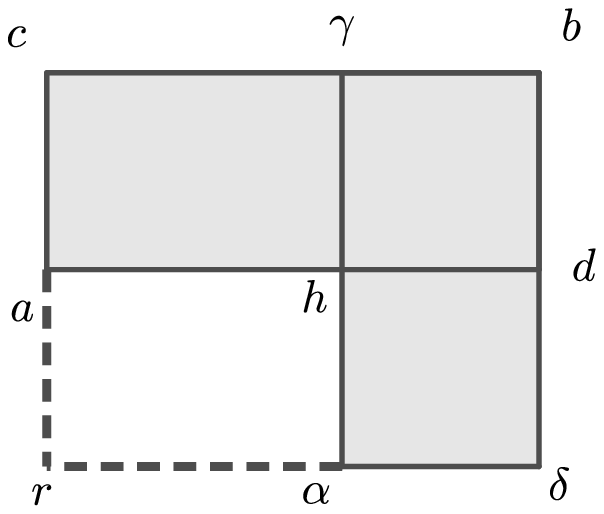}}\quad 
	\subfloat[]{\includegraphics[scale=0.65]{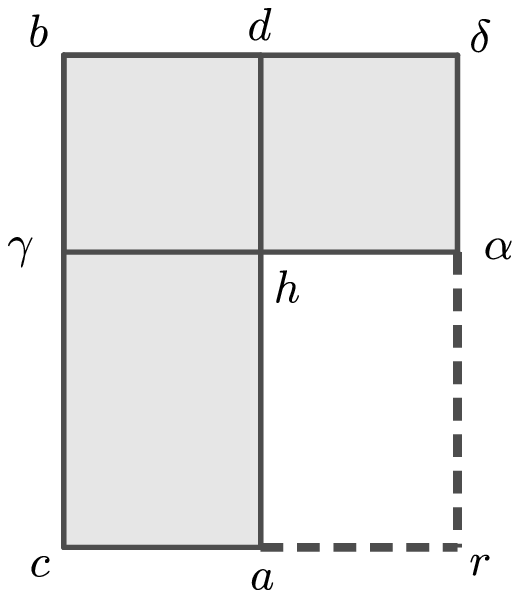}}\quad
	\subfloat[]{\includegraphics[scale=0.65]{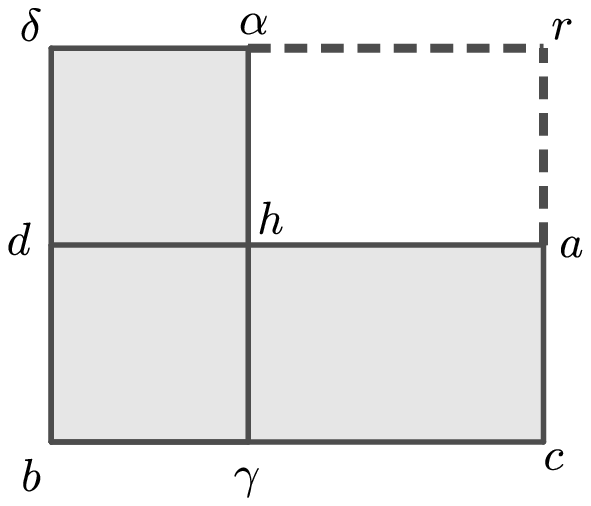}}\quad
	\subfloat[]{\includegraphics[scale=0.65]{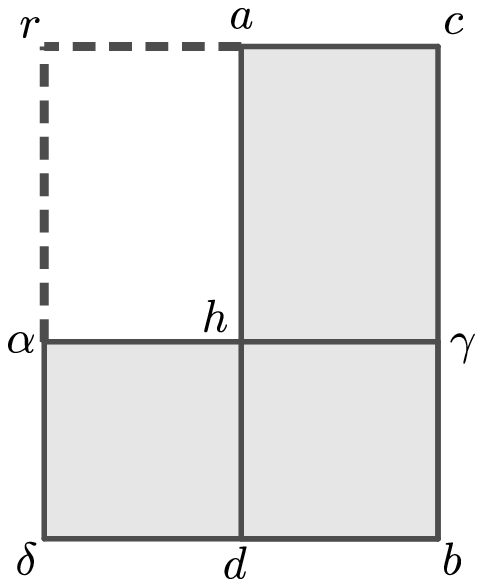}}
	\caption{}
	\label{img_intervalli4}
\end{figure}

\begin{proof}
	Observe that $\gcd(\lt(f_{a,b}),\lt(f_{\alpha,\beta}))\neq 1$ if and only if $\lt(f_{a,b})=x_a x_b$ and $\lt(f_{\alpha,\beta})=x_\alpha x_b$. Since $S(f_{a,b},f_{\alpha,\beta})=-x_\alpha x_c x_d+x_a x_\gamma x_\delta$, we have two possibilities:\\
	1) $\lt(S(f_{a,b},f_{\alpha,\beta}))=-x_\alpha x_c x_d$, in particular $x_a x_\gamma x_\delta \ord x_\alpha x_c x_d$. Observe that, since $x_c x_d$ is not the leading term of $f_{a,b}$, in such a case the only possibilities for the reduction of $S(f_{a,b},f_{\alpha,\beta})$ is through a first division by $f_{\alpha,d}$ if $\lt(f_{\alpha,d})=x_\alpha x_d$ or by $f_{r \gamma}$ if $\lt(f_{r,\gamma})=-x_\alpha x_c$. The first case is possible if and only if $(h,\delta <^{\mathsf{P}}\alpha)\vee (h,\delta <^{\mathsf{P}} d)$, and in such case indeed, after a little computation, $S(f_{a,b},f_{\alpha,\beta})$ reduces by $f_{\alpha,d}$ to $x_\delta(x_a x_\gamma-x_c x_h)=x_\delta f_{a\gamma}$ and this one reduces to 0. For this case we obtain the condition (1) of this lemma. The second case is possible if and only if the condition (2) is satisfied, that is if $[r,h]$ is an inner interval of $\cP$ and $(r,\gamma <^{\mathsf{P}}\alpha)\vee (r,\gamma <^{\mathsf{P}} c)$. In such a case in fact $S(f_{a,b},f_{\alpha,\beta})$ reduces through $f_{r,\gamma}$ to $x_\gamma(x_a x_\delta-x_r x_d)=x_\gamma f_{a\delta}$ and this one reduces to 0.\\
	2) $\lt(S(f_{a,b},f_{\alpha,\beta}))=x_a x_\gamma x_\delta$, in particular $x_\alpha x_c x_d \ord x_a x_\gamma x_\delta$. We can argue as in the first part of this proof observing that, since $x_\gamma x_\delta$ is not the leading term of $f_{\alpha,\beta}$, in such a case the only possibilities for the reduction of $S(f_{a,b},f_{\alpha,\beta})$ is through $f_{a,\gamma}$ if $\lt(f_{a,\gamma})=x_a x_\gamma$ or by $f_{r,d}$ if $\lt(f_{r,d})=-x_a x_\delta$. The first case is possible if and only if $(h,c <^{\mathsf{P}}a)\vee (h,c <^{\mathsf{P}} \gamma)$ , that is the condition (3) holds, while the second is possible if and only if $[r,h]$ is an inner interval of $\cP$ and $(r,d <^{\mathsf{P}}a)\vee (r,d <^{\mathsf{P}} \delta)$, that is the condition (4) is satisfied. In both cases $S(f_{a,b},f_{\alpha,\beta})$ reduces to $0$. \\ 
The last statement of this lemma is verified since the only effects of the rotation of a configuration are the different notations for the same intervals (for instance $[a,b]$ becomes $[c,d]$, $[b,a]$ or $[d,c]$) or the change of the sign of the binomials in the generators of $I_{\cP}$.
\end{proof}

\noindent The following four lemmas can be proved by the same arguments of Lemma~\ref{intervalli4}, so we omit their proofs.

\begin{lemma}
	Let $\cP$ be a collection of cells and $[a,b]$ and $[\alpha,\beta]$ be two inner intervals with $\gamma=b$ and $\alpha \in ]d,b[$ (see Figure~\ref{img_intervalli5}(A)). Let $h$ be the vertex such that $[h,b]$ is the inner interval having $c,\alpha$ as anti-diagonal corner and $r$ be the vertex such that $r,\alpha$ are the anti-diagonal corners of the interval $[d,\delta]$. Let $<^\mathsf{P}$ be a $\mathsf{P}$-order on $V(\cP)$ and suppose that $\gcd(\lt(f_{a,b}),\lt(f_{\alpha,\beta}))\neq 1$. Then $S(f_{a,b},f_{\alpha,\beta})$ reduces to 0 modulo $\mathcal{G}$ with respect to $\ord$ if and only if one of the following conditions occurs:
	\begin{enumerate}
		\item $x_a x_\alpha x_\beta \ord x_\delta x_c x_d$ and in addiction $h,\beta <^{\mathsf{P}}c$ or $h,\beta <^{\mathsf{P}} \delta$;
		\item $x_a x_\alpha x_\beta \ord x_\delta x_c x_d$, $\{d,\delta,\alpha,r\}$ is the set of vertices of an inner interval of $\cP$ and in addiction $r,\alpha <^{\mathsf{P}}\delta$ or $r,\alpha <^{\mathsf{P}} d$;
		\item $x_\delta x_c x_d \ord x_a x_\alpha x_\beta$ and in addiction $h,d <^{\mathsf{P}}a$ or $h,d <^{\mathsf{P}} \alpha$;
		\item $x_\delta x_c x_d \ord x_a x_\alpha x_\beta$, $\{d,\delta,\alpha,r\}$ is the set of vertices of an inner interval of $\cP$ and in addiction $r,c <^{\mathsf{P}}a$ or $r,c <^{\mathsf{P}} \beta$.
	\end{enumerate}
The same characterization holds for $S(f_{c,d},f_{\gamma,\delta})$, $S(f_{b,a},f_{\beta,\alpha})$ and $S(f_{d,c},f_{\delta,\gamma})$ considering all the rotations of the described configuration (see respectively Figure~\ref{img_intervalli5}(B), Figure~\ref{img_intervalli5}(C) and Figure~\ref{img_intervalli5}(D)).	
	\label{intervalli5}
\end{lemma}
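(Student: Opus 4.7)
The plan is to follow the argument of Lemma~\ref{intervalli4} almost verbatim, with the variables relabeled to match the new configuration of Figure~\ref{img_intervalli5}(A). Since $b=\gamma$ is the only common vertex of $[a,b]$ and $[\alpha,\beta]$, the assumption $\gcd(\lt(f_{a,b}),\lt(f_{\alpha,\beta}))\neq 1$ forces $\lt(f_{a,b})=x_ax_b$ and $\lt(f_{\alpha,\beta})=-x_bx_\delta$; a direct calculation then yields
$$S(f_{a,b},f_{\alpha,\beta})=x_ax_\alpha x_\beta - x_cx_dx_\delta,$$
and I would split the argument according to which of these two monomials is the leading one.

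In the case $\lt(S)=-x_cx_dx_\delta$, I observe that $x_cx_d$ is not a leading term of $f_{a,b}$ nor of any other inner $2$-minor, since $[a,b]$ is the unique inner interval having $\{c,d\}$ as an anti-diagonal pair. Hence the only candidates for the first reducer are binomials whose leading monomial divides $x_cx_dx_\delta$ and actually involves $x_\delta$: namely $f_{h,\beta}=x_hx_\beta-x_cx_\delta$, for which $[h,\beta]=[h,b]\cup[\alpha,\beta]$ is automatically an inner interval, and $f_{d,\delta}=x_dx_\delta-x_\alpha x_r$, which exists precisely when $\{d,\delta,\alpha,r\}$ spans an inner interval. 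Their leading terms are $-x_cx_\delta$ and $x_dx_\delta$ exactly under the inequalities of conditions (1) and (2) respectively. A short computation shows that after one such reduction the remainder is $x_\beta f_{a,\alpha}$ in the first subcase and $x_\alpha f_{a,\beta}$ in the second; since $[a,\alpha]\subset[a,b]$ is always inner and $[a,\beta]=[a,b]\cup[\alpha,\beta]\cup[d,\delta]$ is inner whenever $[d,\delta]$ is, each such remainder further reduces to $0$.

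The symmetric case $\lt(S)=x_ax_\alpha x_\beta$ is handled in the same way: the first reduction must use either $f_{a,\alpha}=x_ax_\alpha-x_hx_d$ (yielding condition (3) and leaving the remainder $x_df_{h,\beta}$) or $f_{a,\beta}=x_ax_\beta-x_cx_r$ (yielding condition (4) and leaving $-x_cf_{d,\delta}$), and both remainders reduce to $0$ as above. The bridge between the $\mathsf{P}$-order inequalities and the identification of the leading term is the elementary fact that for the lex order $\ord$ one has $\lt(x_ux_v-x_px_q)=x_ux_v$ if and only if the $<^{\mathsf{P}}$-largest element of $\{u,v,p,q\}$ belongs to $\{u,v\}$, which is exactly what the pairs of strict inequalities in (1)--(4) encode.

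The main technical obstacle, inherited from Lemma~\ref{intervalli4}, is the ``only if'' direction: one must check that in each case no inner $2$-minor other than the two enumerated ones can serve as the first reducer, which amounts to enumerating the inner intervals of $\cP$ with corners in $\{a,b,c,d,\alpha,\beta,\gamma,\delta,h,r\}$ and eliminating each alternative by inspection. Finally, the statements for $S(f_{c,d},f_{\gamma,\delta})$, $S(f_{b,a},f_{\beta,\alpha})$ and $S(f_{d,c},f_{\delta,\gamma})$ (Figures~\ref{img_intervalli5}(B)--(D)) follow from the same argument after a direct relabeling of the four corners and the induced sign changes in the binomials, which leave the structure of the argument unchanged.
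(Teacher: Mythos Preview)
Your proposal is correct and follows exactly the approach the paper intends: the paper omits the proof entirely, stating only that it ``can be proved by the same arguments of Lemma~\ref{intervalli4}'', and you have carried out precisely that relabeled computation. Your identification of the $S$-polynomial, the two possible first reducers in each branch, the resulting remainders $x_\beta f_{a,\alpha}$, $x_\alpha f_{a,\beta}$, $x_d f_{h,\beta}$, $-x_c f_{d,\delta}$, and the translation of the leading-term conditions into the $\mathsf{P}$-order inequalities of (1)--(4) all match the pattern of Lemma~\ref{intervalli4}.
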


\begin{figure}[h]
	\centering
	\subfloat[]{\includegraphics[scale=0.65]{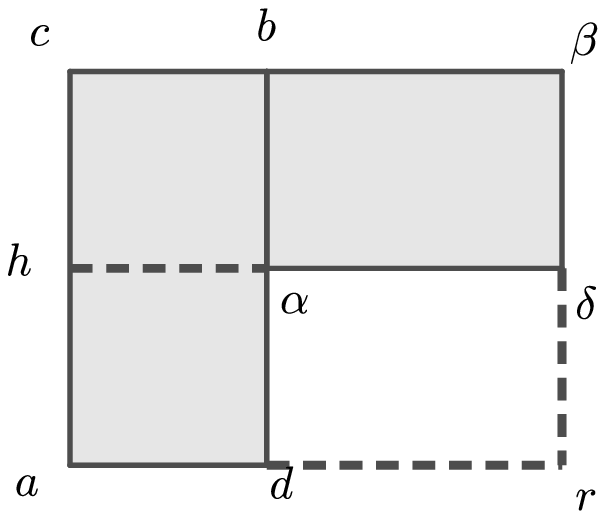}}\quad 
	\subfloat[]{\includegraphics[scale=0.65]{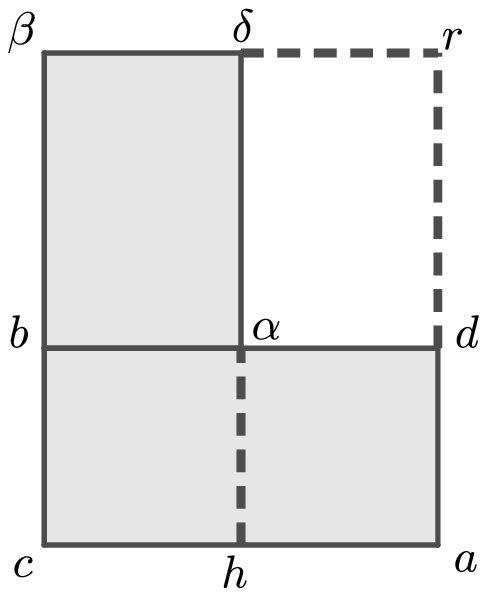}}\quad
	\subfloat[]{\includegraphics[scale=0.65]{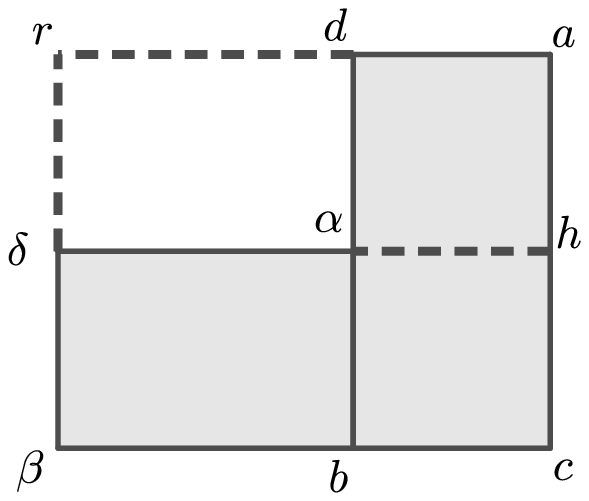}}\quad
	\subfloat[]{\includegraphics[scale=0.65]{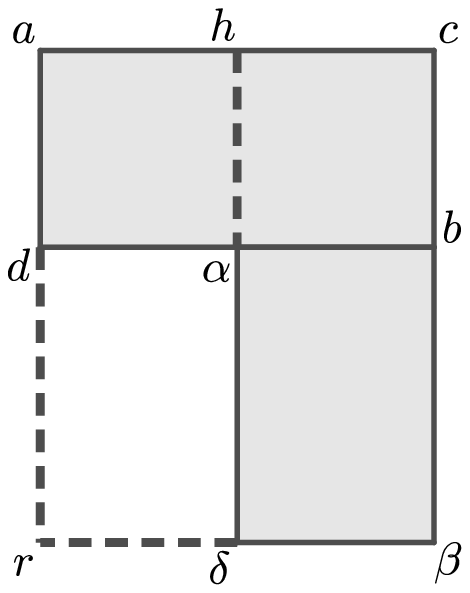}}
	\caption{}
	\label{img_intervalli5}
\end{figure}


\begin{lemma}
	Let $\cP$ be a collection of cells and $[a,b]$ and $[\alpha,\beta]$ be two inner intervals with $\alpha=c$ and $b \in ]\alpha,\delta[$ (see Figure~\ref{img_intervalli6}(A)). Let $h$ be the vertex such that $h,\delta$ are the diagonal corners of the inner interval $[b,\beta]$ and $r$ be the vertex such that $r,b$ are the anti-diagonal corners of the interval $[d,\delta]$. Let $<^\mathsf{P}$ be a $\mathsf{P}$-order on $V(\cP)$ and suppose that $\gcd(\lt(f_{a,b}),\lt(f_{\alpha,\beta}))\neq 1$. Then $S(f_{a,b},f_{\alpha,\beta})$ reduces to 0 modulo $\mathcal{G}$ with respect to $\ord$ if and only if one of the following conditions occurs:
	\begin{enumerate}
		\item $x_d x_\delta x_\gamma \ord x_\beta  x_a x_b$ and in addiction $h,\delta <^{\mathsf{P}}b$ or $h,\delta <^{\mathsf{P}} \beta$;
		\item $x_d x_\delta x_\gamma \ord x_\beta  x_a x_b$, $\{d,\delta,b,r\}$ is the set of vertices of an inner interval of $\cP$ and in addiction $r,\gamma <^{\mathsf{P}}a$ or $r,\gamma <^{\mathsf{P}} \beta$;
		\item $x_\beta  x_a x_b \ord x_d x_\delta x_\gamma$ and in addiction $h,a <^{\mathsf{P}}d$ or $h,a <^{\mathsf{P}} \gamma$;
		\item $x_\beta  x_a x_b \ord x_d x_\delta x_\gamma$, $\{d,\delta,b,r\}$ is the set of vertices of an inner interval of $\cP$ and in addiction $r,b <^{\mathsf{P}}d$ or  $r,b <^{\mathsf{P}} \delta$.
	\end{enumerate}
The same characterization holds for $S(f_{c,d},f_{\gamma,\delta})$, $S(f_{b,a},f_{\beta,\alpha})$ and $S(f_{d,c},f_{\delta,\gamma})$ considering all the rotations of the described configuration (see respectively Figure~\ref{img_intervalli6}(B), Figure~\ref{img_intervalli6}(C) and Figure~\ref{img_intervalli6}(D)).		
	\label{intervalli6}
\end{lemma}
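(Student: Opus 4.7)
The plan is to mimic the proof of Lemma~\ref{intervalli4} line by line. Since $\{a,b,c,d\}\cap\{\alpha,\beta,\gamma,\delta\}=\{c=\alpha\}$, the unique shared variable is $x_c=x_\alpha$, and a quick inspection of the four possible combinations of leading terms shows that $\gcd(\lt(f_{a,b}),\lt(f_{\alpha,\beta}))\neq 1$ forces $\lt(f_{a,b})=-x_c x_d$ and $\lt(f_{\alpha,\beta})=x_\alpha x_\beta$. Computing the $S$-polynomial with $\mathrm{lcm}=x_c x_d x_\beta$, the two middle terms cancel because $\alpha=c$, yielding
\[
S(f_{a,b},f_{\alpha,\beta})=x_d x_\gamma x_\delta-x_a x_b x_\beta.
\]

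Then I would split on the sign of the leading term of $S$, which corresponds to the dichotomy (1)--(2) versus (3)--(4) in the statement. For $\lt(S)=-x_a x_b x_\beta$, any reducer must have leading monomial supported on two of $\{x_a,x_b,x_\beta\}$; $f_{a,b}$ is excluded by the previous paragraph, so the only candidates are $f_{b,\beta}$ (always a generator, since $[b,\beta]$ is a sub-rectangle of the inner interval $[\alpha,\beta]$) and $f_{a,\beta}$ (a generator precisely when $[d,\delta]$ is an inner interval with vertex set $\{d,\delta,b,r\}$, in which case $[a,\beta]$ is inner too). The conditions under which their leading terms equal $x_b x_\beta$ and $x_a x_\beta$ respectively translate directly into the hypotheses (1) and (2). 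The case $\lt(S)=x_d x_\gamma x_\delta$ is handled symmetrically: now $f_{\alpha,\beta}$ is excluded, and the candidates are $f_{a,h}$ (always valid because the single-column strip $[a,h]$ is contained in $[a,b]\cup[\alpha,\beta]$) and $f_{d,\delta}$ (available when $[d,\delta]$ is inner), producing (3) and (4).

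To finish, I would verify each reduction terminates via the explicit identities
\[
S+x_a f_{b,\beta}=-x_\delta f_{a,h},\qquad S+x_b f_{a,\beta}=x_\gamma f_{d,\delta},
\]
\[
S+x_\delta f_{a,h}=-x_a f_{b,\beta},\qquad S-x_\gamma f_{d,\delta}=-x_b f_{a,\beta},
\]
each of which is a monomial multiple of a generator and so reduces to $0$ modulo $\mathcal{G}$. The delicate point is the converse direction, ruling out every other reducer: this amounts to listing all inner intervals whose diagonal or anti-diagonal corners form a pair in the support of $\lt(S)$, and the geometric constraints $\alpha=c$ and $b\in\,]\alpha,\delta[$ restrict these to precisely the intervals above. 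The statement for the three rotated configurations $S(f_{c,d},f_{\gamma,\delta})$, $S(f_{b,a},f_{\beta,\alpha})$, and $S(f_{d,c},f_{\delta,\gamma})$ then follows because rotation only relabels diagonal/anti-diagonal corners and flips signs in the defining binomials, leaving the characterization invariant.
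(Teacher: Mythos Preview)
Your argument is correct and follows exactly the template the paper uses for Lemma~\ref{intervalli4}; indeed the paper omits the proof of this lemma entirely, stating only that it ``can be proved by the same arguments of Lemma~\ref{intervalli4}''. One small inaccuracy: the interval $[a,h]$ is not in general a single-column strip (it has width equal to that of $[a,b]$), but your real claim---that its cells are contained in $[a,b]\cup[\alpha,\beta]$ and hence $[a,h]$ is an inner interval---is correct and is all that is needed.
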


\begin{figure}[h]
	\centering
	\subfloat[]{\includegraphics[scale=0.65]{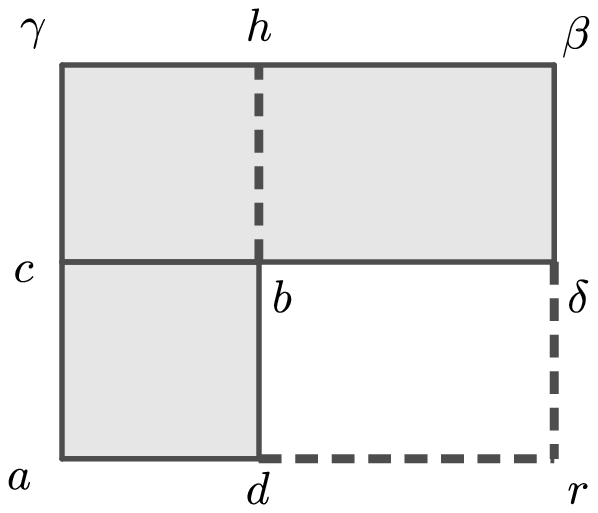}}\quad 
	\subfloat[]{\includegraphics[scale=0.65]{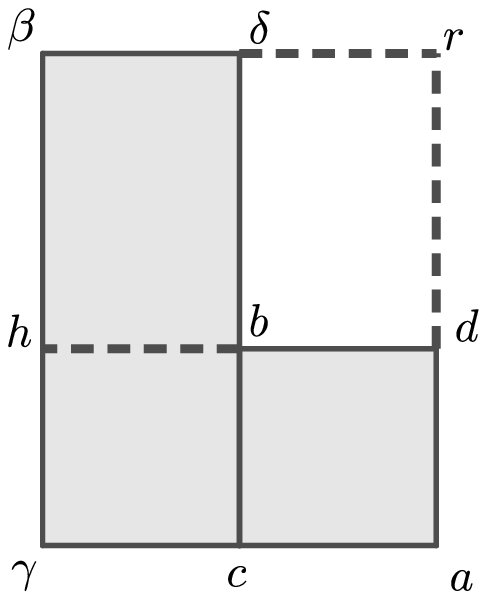}}\quad
	\subfloat[]{\includegraphics[scale=0.65]{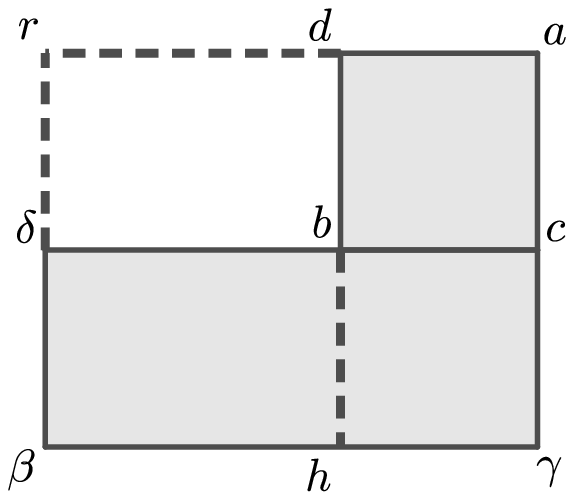}}\quad
	\subfloat[]{\includegraphics[scale=0.65]{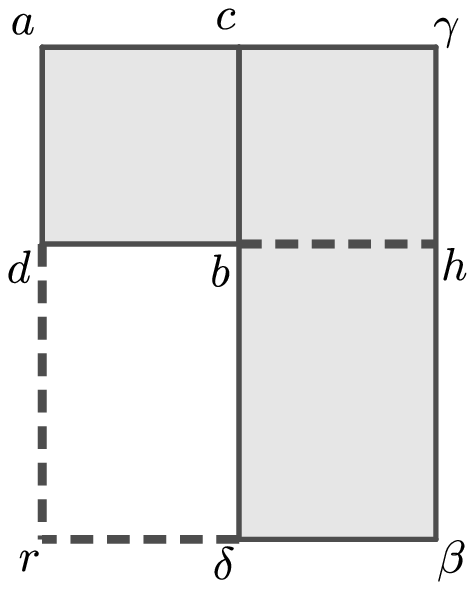}}
	\caption{}
	\label{img_intervalli6}
\end{figure}


\begin{lemma}
	Let $\cP$ be a collection of cells and $[a,b]$ and $[\alpha,\beta]$ be two inner intervals with $\gamma=c$ and $\delta \in ]a,b[$ (see Figure~\ref{img_intervalli7}(A)). Let $[h,r]$ be the inner interval having  $d,\delta$ as anti-diagonal corners. Let $<^\mathsf{P}$ be a $\mathsf{P}$-order on $V(\cP)$ and suppose that $\gcd(\lt(f_{a,b}),\lt(f_{\alpha,\beta}))\neq 1$. Then $S(f_{a,b},f_{\alpha,\beta})$ reduces to 0 modulo $\mathcal{G}$ with respect to $\ord$ if and only if one of the following conditions occurs:
	\begin{enumerate}
		\item $x_d x_\alpha x_\beta \ord x_\delta x_a x_b$ and in addiction $h,\alpha <^{\mathsf{P}}a$ or $h,\alpha <^{\mathsf{P}} \delta$;
		\item $x_d x_\alpha x_\beta \ord x_\delta x_a x_b$ and in addiction $r,\beta <^{\mathsf{P}}\delta$ or $r,\beta <^{\mathsf{P}} b$;
		\item $x_\delta x_a x_b \ord x_d x_\alpha x_\beta$ and in addiction $r,a <^{\mathsf{P}}\alpha$ or $r,a <^{\mathsf{P}} d$;
		\item $x_\delta x_a x_b \ord x_d x_\alpha x_\beta$ and in addiction $h,b <^{\mathsf{P}}d$ or $h,b <^{\mathsf{P}} \beta$.
	\end{enumerate}
The same characterization holds for $S(f_{c,d},f_{\gamma,\delta})$, $S(f_{b,a},f_{\beta,\alpha})$ and $S(f_{d,c},f_{\delta,\gamma})$ considering all the rotations of the described configuration (see respectively Figure~\ref{img_intervalli7}(B), Figure~\ref{img_intervalli7}(C) and Figure~\ref{img_intervalli7}(D)).		
	\label{intervalli7}
\end{lemma}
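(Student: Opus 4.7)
The plan is to adapt the argument from Lemma~\ref{intervalli4} to this new configuration, which shares the structural feature that both inner intervals meet in exactly one vertex.

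First I would compute the $S$-polynomial explicitly. Since $\gamma=c$ is the only vertex shared between the two intervals, the condition $\gcd(\lt(f_{a,b}),\lt(f_{\alpha,\beta}))\neq 1$ forces both leading monomials to contain $x_c$, hence $\lt(f_{a,b})=-x_c x_d$ and $\lt(f_{\alpha,\beta})=-x_c x_\delta$. A direct computation then yields $S(f_{a,b},f_{\alpha,\beta})=x_d x_\alpha x_\beta-x_\delta x_a x_b$.

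Next I would split the discussion according to which monomial of this $S$-polynomial is the leading term, obtaining the dichotomy $x_d x_\alpha x_\beta\ord x_\delta x_a x_b$ versus $x_\delta x_a x_b\ord x_d x_\alpha x_\beta$. For each case I would identify the generators in $\mathcal{G}$ that could possibly cancel the leading monomial. Since $[a,b]$ is an inner interval and $\delta\in ]a,b[$, the four sub-rectangles $[a,\delta]$, $[\delta,b]$, $[a,r]$ and $[h,b]$ are automatically inner intervals of $\cP$. The associated inner $2$-minors $f_{a,\delta}=x_a x_\delta-x_\alpha x_h$, $f_{\delta,b}=x_\delta x_b-x_r x_\beta$, $f_{a,r}=x_a x_r-x_d x_\alpha$ and $f_{h,b}=x_h x_b-x_d x_\beta$ are exactly the candidates: the first two can cancel $-x_\delta x_a x_b$ and yield conditions $(1)$ and $(2)$, while the last two can cancel $x_d x_\alpha x_\beta$ and yield conditions $(3)$ and $(4)$.

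For each candidate generator, the condition that its leading term equals the monomial we need to cancel translates, via the definition of $\ord$, into a simple inequality in the $\mathsf{P}$-order of the form ``both vertices appearing in the unused monomial are strictly smaller than one of the two vertices appearing in the used monomial'', which is precisely the shape of conditions $(1)$--$(4)$. A short calculation would then show that after each such first reduction, what remains is a scalar multiple of another element of $\mathcal{G}$ (for instance, reducing through $f_{a,\delta}$ in case $(1)$ leaves $-x_\alpha f_{h,b}$, while reducing through $f_{\delta,b}$ leaves $-x_\beta f_{a,r}$), which immediately reduces to $0$.

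The step that I expect to require the most care is the \emph{only if} direction: verifying that no generator of $\mathcal{G}$ outside the four sub-intervals listed above could cancel the leading term of $S(f_{a,b},f_{\alpha,\beta})$. This uses the specific geometry of the configuration — each of the vertices $a,b,d,\alpha,\beta,\delta$ can be a corner of only a very restricted family of inner intervals, all contained in $[a,b]$ — and it is what makes $(1)$--$(4)$ not merely sufficient but necessary. The rotations claim is then automatic, as rotating the configuration only permutes labels and flips signs in the corresponding generators.
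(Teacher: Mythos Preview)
Your proposal is correct and follows exactly the template the paper intends: the paper omits this proof entirely, referring back to the argument of Lemma~\ref{intervalli4}, and your adaptation --- computing $S(f_{a,b},f_{\alpha,\beta})=x_d x_\alpha x_\beta-x_\delta x_a x_b$ from $\lt(f_{a,b})=-x_cx_d$, $\lt(f_{\alpha,\beta})=-x_cx_\delta$, splitting on the leading term, and reducing via $f_{a,\delta},f_{\delta,b},f_{a,r},f_{h,b}$ --- is precisely that argument.

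One small sharpening for the \emph{only if} direction: your claim that ``each of the vertices $a,b,d,\alpha,\beta,\delta$ can be a corner of only a very restricted family of inner intervals, all contained in $[a,b]$'' is not literally true in a general collection of cells. The correct (and equally short) argument is that any generator whose leading term divides, say, $x_\delta x_a x_b$ must have \emph{two} of $a,\delta,b$ as a pair of diagonal or anti-diagonal corners; since $a,\delta,b$ are pairwise in diagonal position this forces the generator to be $f_{a,b}$, $f_{a,\delta}$ or $f_{\delta,b}$, and $f_{a,b}$ is excluded because $\lt(f_{a,b})=-x_cx_d$. The analogous observation for $x_d x_\alpha x_\beta$ (where the pairs $\{d,\alpha\}$ and $\{d,\beta\}$ are anti-diagonal and $\{\alpha,\beta\}$ is diagonal) yields exactly $f_{a,r}$, $f_{h,b}$ and the already-excluded $f_{\alpha,\beta}$.
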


\begin{figure}[h]
	\centering
	\subfloat[]{\includegraphics[scale=0.65]{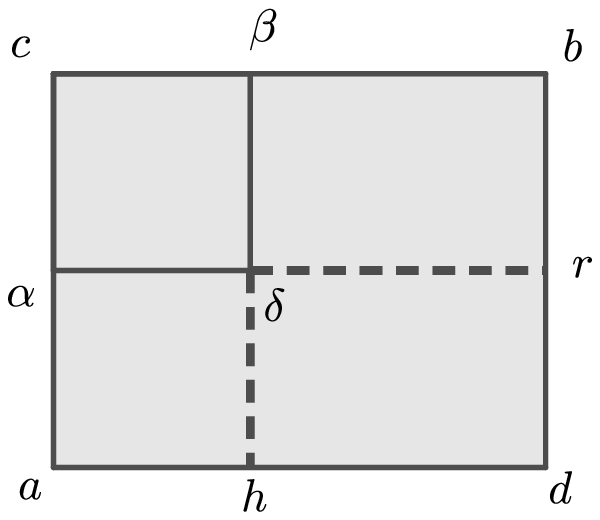}}\quad 
	\subfloat[]{\includegraphics[scale=0.65]{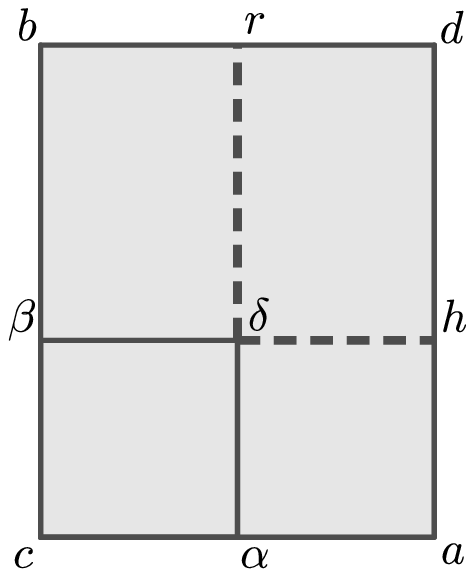}}\quad
	\subfloat[]{\includegraphics[scale=0.65]{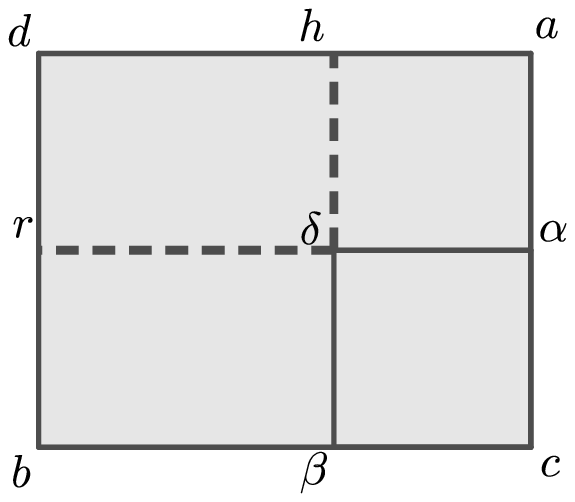}}\quad
	\subfloat[]{\includegraphics[scale=0.65]{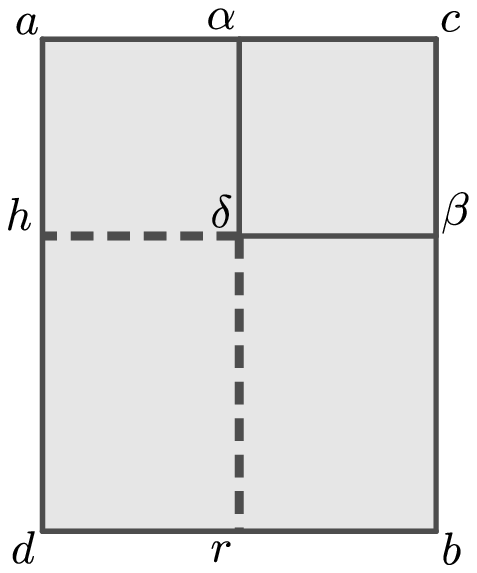}}
	\caption{}
	\label{img_intervalli7}
\end{figure}


\begin{lemma}
	Let $\cP$ be a collection of cells and $[a,b]$ and $[\alpha,\beta]$ be two inner intervals with $\alpha=b$ and $\beta \notin [a,b]$ (see Figure~\ref{img_intervalli8}(A)). Let $h,r$ be the anti-diagonal corners,  different to $b$, respectively of the intervals $[d,\delta]$ and $[c,\gamma]$. Let $<^\mathsf{P}$ be a $\mathsf{P}$-order on $V(\cP)$ and suppose that $\gcd(\lt(f_{a,b}),\lt(f_{\alpha,\beta}))\neq 1$. Then $S(f_{a,b},f_{\alpha,\beta})$ reduces to 0 modulo $\mathcal{G}$ with respect to $\ord$ if and only if one of the following conditions occurs:
	\begin{enumerate}
\item $x_a x_\gamma x_\delta \ord x_\beta x_d x_c$, $\{d,\delta,b,h\}$ is the set of vertices of an inner interval of $\cP$ and in addiction $h,\gamma <^{\mathsf{P}}\beta$ or $h,\gamma <^{\mathsf{P}} d$;
	\item $x_a x_\gamma x_\delta \ord x_\beta x_d x_c$, $\{c,\gamma,b,r\}$ is the set of vertices of an inner interval of $\cP$ and in addiction $r,\delta <^{\mathsf{P}}\beta$ or $r,\delta <^{\mathsf{P}} c$;
		\item $x_\beta x_d x_c \ord x_a x_\gamma x_\delta$, $\{c,\gamma,b,r\}$ is the set of vertices of an inner interval of $\cP$ and in addiction $r,d <^{\mathsf{P}}a)$ or $r,d <^{\mathsf{P}} \gamma$;
		\item $x_\beta x_d x_c \ord x_a x_\gamma x_\delta$, $\{d,\delta,b,h\}$ is the set of vertices of an inner interval of $\cP$ and in addiction $c,h <^{\mathsf{P}}a$ or $c,h <^{\mathsf{P}} \delta$.
	\end{enumerate}
The same characterization holds for $S(f_{c,d},f_{\gamma,\delta})$, $S(f_{b,a},f_{\beta,\alpha})$ and $S(f_{d,c},f_{\delta,\gamma})$ considering all the rotations of the described configuration (see respectively Figure~\ref{img_intervalli8}(B), Figure~\ref{img_intervalli8}(C) and Figure~\ref{img_intervalli8}(D)).		
	\label{intervalli8}
\end{lemma}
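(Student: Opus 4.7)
The plan is to mirror exactly the reasoning of Lemma~\ref{intervalli4}. First, observe that the hypothesis $\gcd(\lt(f_{a,b}),\lt(f_{\alpha,\beta}))\neq 1$ together with $\alpha=b$ forces $\lt(f_{a,b})=x_ax_b$ and $\lt(f_{b,\beta})=x_bx_\beta$, because $x_b$ is the only variable common to the two binomials. A direct computation then gives
\[
S(f_{a,b},f_{b,\beta})\;=\;-x_\beta x_c x_d+x_ax_\gamma x_\delta,
\]
so the study splits into the two cases $x_ax_\gamma x_\delta\ord x_\beta x_c x_d$ and $x_\beta x_c x_d\ord x_ax_\gamma x_\delta$, which correspond respectively to conditions (1)-(2) and (3)-(4) of the statement.

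Next I would list the only candidate reducers. In the first case the leading term is $-x_\beta x_c x_d$; since $x_cx_d$ cannot be the leading term of any generator (the only inner interval with $c,d$ as anti-diagonal pair is $[a,b]$, whose leading term is $x_ax_b$ by assumption), the reduction must proceed through a binomial whose leading term involves $x_\beta$ and a factor among $\{x_c,x_d\}$. A direct geometric inspection shows that the only such candidates are $f_{d,\beta}$ (whose diagonal corners are $d,\beta$ and anti-diagonal corners are $\gamma,h$) and $f_{c,\beta}$ (with diagonal $c,\beta$ and anti-diagonal $r,\delta$). The second case is symmetric: the only candidates to reduce $x_ax_\gamma x_\delta$ are $f_{a,\gamma}$ and $f_{a,\delta}$, since $x_\gamma x_\delta$ is the non-leading term of $f_{b,\beta}$.

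At this point I would verify the two ingredients that each of the four conditions isolates:
\begin{itemize}
\item \emph{Inner interval condition.} The rectangles $[d,\beta]$, $[c,\beta]$, $[a,\gamma]$, $[a,\delta]$ are each the union of $[b,\beta]$ (resp.\ $[a,b]$) with $[d,\delta]$ or $[c,\gamma]$; hence $[d,\beta]$ and $[a,\delta]$ are inner if and only if $\{d,\delta,b,h\}$ spans an inner interval, whereas $[c,\beta]$ and $[a,\gamma]$ are inner if and only if $\{c,\gamma,b,r\}$ spans one.
\item \emph{Leading term condition.} For instance $\lt(f_{d,\beta})=x_dx_\beta$ iff $\max^{\mathsf P}\{d,\beta\}>^{\mathsf P}\max^{\mathsf P}\{\gamma,h\}$; since the four vertices are distinct, this is equivalent to $(h,\gamma<^{\mathsf P}\beta)\vee(h,\gamma<^{\mathsf P}d)$, i.e.\ condition (1). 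The analogous equivalences yield conditions (2), (3), (4).
\end{itemize}
Combining these two ingredients proves the necessity of each alternative.

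Finally, for sufficiency I would carry out the four bookkeeping reductions. For example, under condition~(1), dividing $S$ by $f_{d,\beta}$ with quotient $-x_c$ produces
\[
S+x_cf_{d,\beta}\;=\;x_\gamma(x_ax_\delta-x_cx_h)\;=\;x_\gamma f_{a,\delta},
\]
and $f_{a,\delta}\in\mathcal G$ because $[a,\delta]$ is inner (union of $[a,b]$ and $[d,\delta]$); a second reduction by $f_{a,\delta}$ gives $0$. The other three cases give, in turn, $x_\delta f_{a,\gamma}$, $-x_d f_{c,\beta}$, $-x_c f_{d,\beta}$, each of which reduces to $0$ by construction. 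The rotational symmetries noted at the end of the statement are handled, exactly as in Lemma~\ref{intervalli4}, by the observation that a rotation of the configuration only relabels the corners of the two intervals and possibly changes the overall sign of the binomials, leaving the argument unchanged. The main (only) nontrivial point in the whole argument is the bookkeeping verification that the \emph{sole} reducers of $\lt(S)$ are the four listed ones; everything else is routine.
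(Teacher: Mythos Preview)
Your proposal is correct and follows precisely the approach the paper intends: the paper omits the proof of this lemma entirely, stating only that it ``can be proved by the same arguments of Lemma~\ref{intervalli4}'', and your argument is exactly that transplant. The computation of $S(f_{a,b},f_{b,\beta})=-x_\beta x_cx_d+x_ax_\gamma x_\delta$, the identification of the only possible first reducers in each case ($f_{d,\beta},f_{c,\beta}$ for the first leading term, $f_{a,\gamma},f_{a,\delta}$ for the second), the translation of the leading-term requirement into the order conditions on $\{h,\gamma,\beta,d\}$ etc., and the explicit second-step reductions are all accurate and match what the paper's reference to Lemma~\ref{intervalli4} is meant to convey.
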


\begin{figure}[h]
	\centering
	\subfloat[]{\includegraphics[scale=0.65]{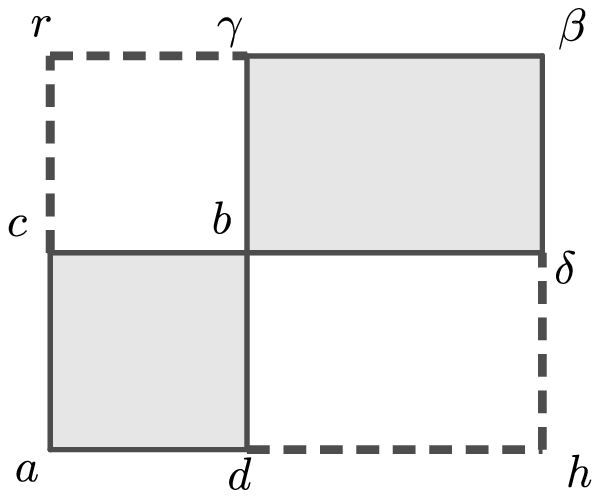}}\quad 
	\subfloat[]{\includegraphics[scale=0.65]{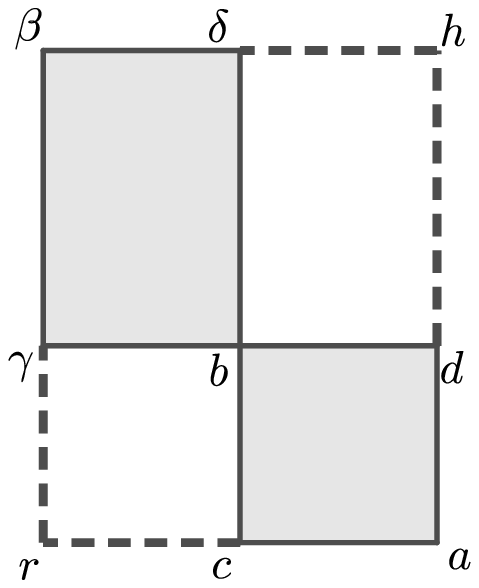}}\quad
	\subfloat[]{\includegraphics[scale=0.65]{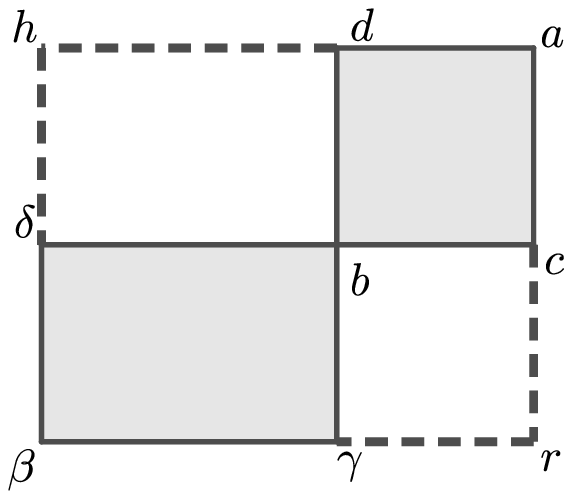}}\quad
	\subfloat[]{\includegraphics[scale=0.65]{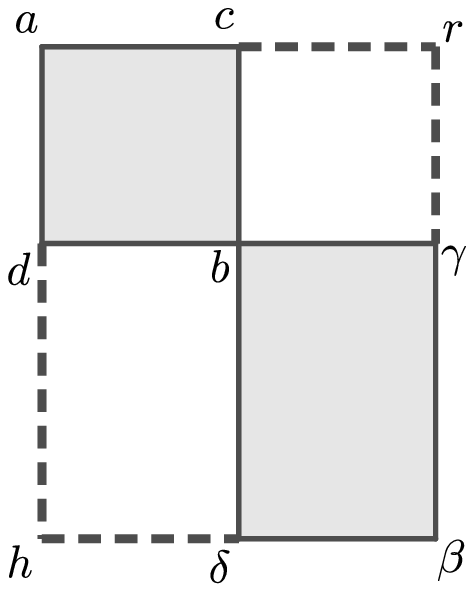}}
	\caption{}
	\label{img_intervalli8}
\end{figure}



\section{Gr\"obner basis of the polyomino ideal of a closed path.}\label{Section: Grobner basis of polyomino ideal of closed path}

\noindent Let $\cP$ be a polyomino. In this section we examine four special configurations of cells of a polyomino, that permit us when $\cP$ is a closed path to provide some particular subsets $Y\subset V(\cP)$ for which we can define the following $\mathsf{P}$-order.

\begin{defn}\rm
	Let $Y\subset V(\cP)$. We define the $\mathsf{P}$-order $<^Y$ in the following way:
	\[
	a<^Y b \Leftrightarrow
	\left\{
	\begin{array}{l}
	a\notin Y\ \mbox{and}\ b\in Y \\
	a,b\notin Y\ \mbox{and}\ a<^1 b \\
	a,b\in Y\ \mbox{and}\ a<^1 b
	\end{array}
	\right.
	\]
for $a,b \in V(\cP)$.
\end{defn}

\noindent We call an \textit{W-pentomino with middle cell $A$} a collection of cells of $\cP$ consisting of an horizontal block $\cB_1=[A_1,B_1]$ of rank two, a vertical block $\cB_2=[A_2,B_2]$ of rank two and a cell $A$ not belonging to $\cB_{1}\cup\cB_2$, such that $V(\cB_1)\cap V(\cB_2)=\{w\}$ and where $w$ is the lower right corner of $A$. Moreover, if $\mathcal{W}$ is a  W-pentomino with middle cell $A$, we denote with $x_W$ the left upper corner of $A$, with $y_W$ the lower right corner of $B_1$ and with $z_W$ the lower right corner of $A_2$. See Figure~\ref{Figura:pentomino}.

\begin{figure}[h]
\centering
\subfloat[]{\includegraphics[scale=0.8]{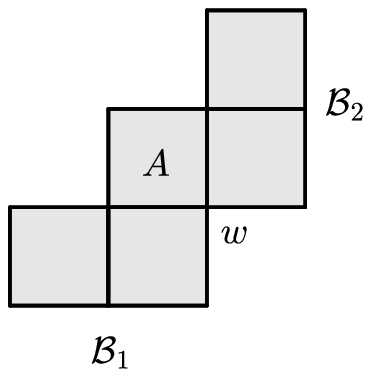}}\qquad \qquad
\subfloat[]{\includegraphics[scale=0.8]{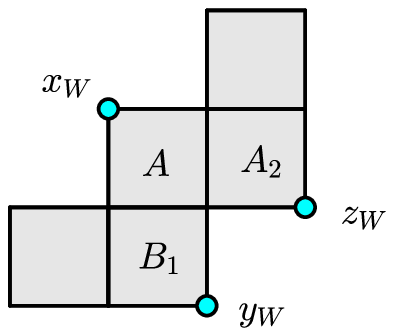}}
\caption{W-pentomino}
\label{Figura:pentomino}
\end{figure}  
 
\noindent We call an \textit{LD-horizontal (vertical) skew tetromino} a collection of cells of $\cP$ consisting of two horizontal (vertical) blocks of rank two $\cB_1=[A_1,B_1]$ and $\cB_2=[A_2,B_2]$ such that $V(B_1)\cap V(A_2)=\{w_1,w_2\}$ and $w_1,w_2$ are right and left upper (lower and upper right) corners of $B_1$. Moreover, if $\mathcal{C}$ is an LD-horizontal (vertical) skew tetromino, we denote with $x_\cC,y_\cC$ the left and right upper corners of $A_2$ (the upper and lower left corners of $B_1$), and with $a_\cC,b_\cC$ the left and right lower corners of $B_1$ (the upper and lower right corners of $A_2$). See Figure~\ref{Figura:skew-tetromino}.

\begin{figure}[h]
	\centering
	\subfloat[]{\includegraphics[scale=0.8]{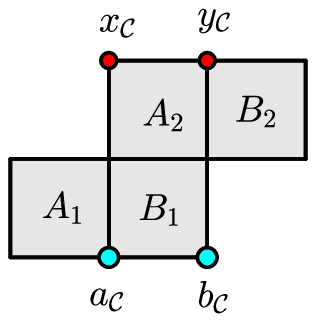}}\qquad \qquad
	\subfloat[]{\includegraphics[scale=0.8]{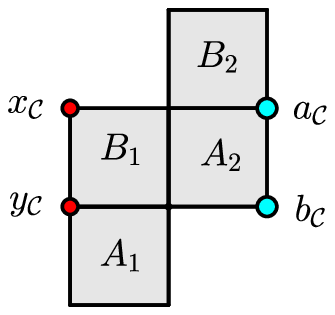}}
	\caption{LD-horizontal skew tetromino (A) and LD-vertical skew tetromino (B)}
	\label{Figura:skew-tetromino}
\end{figure}

\noindent We call an \textit{LD-horizontal (vertical) skew hexomino} a collection of cells of $\cP$ consisting of two horizontal (vertical) blocks of rank three $\cB_1=[A_1,B_1]$ and $\cB_2=[A_2,B_2]$ such that $V(B_1)\cap V(A_2)=\{w_1,w_2\}$ and $w_1,w_2$ are respectively the right and left upper (lower and upper right) corners of $B_1$. Moreover, if $\mathcal{D}$ is an LD-horizontal (vertical) skew tetromino, we denote by $x_D,y_D$ the left and right upper corners of $A_2$ (the upper and lower left corners of $B_1$), and by $a_D,b_D$ the the left and right upper corners of $B_1$ (the upper and lower right corners of $A_2$). See Figure~\ref{Figura:skew-hexomino}.

\begin{figure}[h]
	\centering
	\subfloat[]{\includegraphics[scale=0.8]{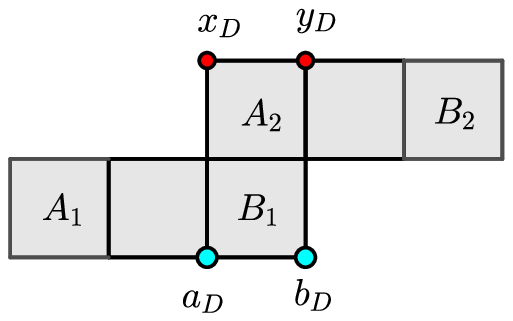}}\qquad \qquad
	\subfloat[]{\includegraphics[scale=0.8]{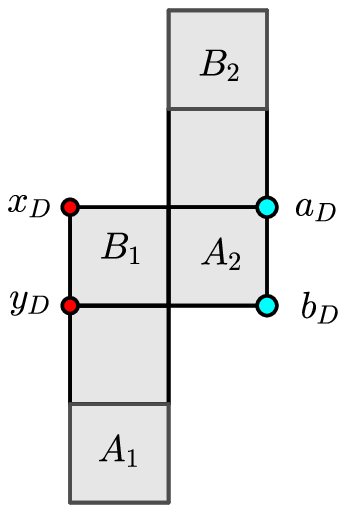}}
	\caption{LD-horizontal skew hexomino (A) and LD-vertical skew hexomino (B)}
	\label{Figura:skew-hexomino}
\end{figure}

\noindent We call an \textit{RW-heptomino with middle cell $A$} a collection of cells of $\cP$ consisting of an horizontal block $\cB_1=[A_1,B_1]$ of rank three, a vertical block $\cB_2=[A_2,B_2]$ of rank three and a cell $A$ not belonging to $\cB_{1}\cup\cB_2$, such that $V(\cB_1)\cap V(\cB_2)=\{w\}$ and where $w$ is the upper left corner of $A$. Moreover, if $\mathcal{T}$ is an RW-pentomino with middle cell $A$, we denote by $x_T$ the right lower corner of $A$, with $y_T$ the left upper corner of $B_2$ and by $z_T$ the left upper corner of $A_1$. See Figure~\ref{Figura:heptomino}.

\begin{figure}[h]
	\centering
	\subfloat[]{\includegraphics[scale=0.7]{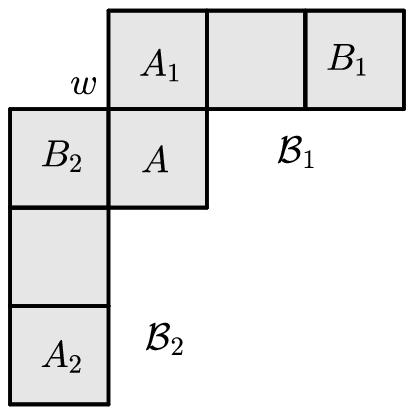}}\qquad \qquad
	\subfloat[]{\includegraphics[scale=0.7]{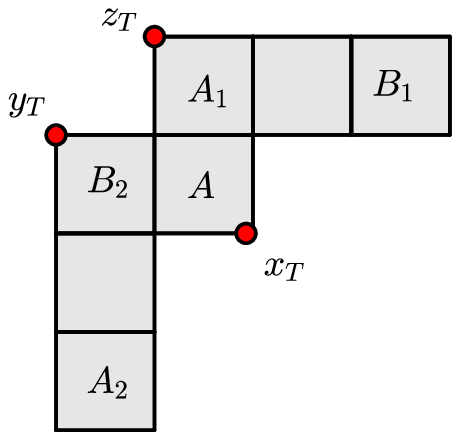}}
	\caption{RW-heptomino}
	\label{Figura:heptomino}
\end{figure}


\begin{thm}
Let $\cP$ be a closed path polyomino not containing any W-pentomino. Let $\mathcal{R}$ be the set of all LD-horizontal and vertical skew tetromino contained in $\cP$ and let $Y=\bigcup_{\cC\in \mathcal{R}}\{x_\cC,y_\cC\}$. Then $\mathcal{G}$ is the reduced Gr\"obner basis of $I_\cP$ with respect to the monomial order $<^{Y}_\mathrm{lex}$.
\label{without_W}
\end{thm}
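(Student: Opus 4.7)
My plan is to verify Buchberger's criterion: every $S$-polynomial $S(f,g)$ with $f,g\in\mathcal{G}$ reduces to zero modulo $\mathcal{G}$ under $<^Y_\mathrm{lex}$. Pairs whose leading terms are coprime reduce automatically, and pairs sharing two corners are disposed of by Lemma~\ref{intervalli2} without any constraint on the $\mathsf{P}$-order. The substantive content is therefore a case analysis over configurations of inner intervals with one shared corner (or a corner sitting on the interior of an edge of the other interval), classified by Lemmas~\ref{intervalli4}--\ref{intervalli8} and their rotations.

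The central geometric input is the closed-path axiom $V(A_i)\cap V(A_j)=\emptyset$ for $|i-j|>2$ (cyclically), which severely restricts which of the Lemma~\ref{intervalli4}--\ref{intervalli8} configurations can actually appear in $\cP$. The key observation is that in a closed path without a W-pentomino, the ``exotic'' pictures at a convex corner of the path that would realize Lemmas~\ref{intervalli6}, \ref{intervalli7}, and \ref{intervalli8} require a W-pentomino to occur, since those configurations place a fifth cell across from a convex bend formed by three consecutive path cells. Under the hypothesis those cases therefore do not arise, and the configurations that remain are exactly those of Lemma~\ref{intervalli4} and Lemma~\ref{intervalli5}, occurring precisely where two inner intervals sit over an LD-skew tetromino (or its hexomino extension) $\mathcal{C}\in\mathcal{R}$.

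For these remaining cases I would verify that $<^Y$ satisfies one of the four sub-conditions of the relevant lemma. By construction, the vertices $x_\mathcal{C},y_\mathcal{C}\in Y$ are $<^Y$-larger than every vertex outside $Y$, which forces the leading-term comparison (say $x_ax_\gamma x_\delta \ord x_\alpha x_c x_d$ versus its reverse) to tip in a predictable direction determined by which of the six vertices lie in $Y$; the auxiliary inequalities on the helper vertices $h,r,\delta,\beta$ then reduce to checking the $<^1$-ordering of a few specific vertices sitting lexicographically before the target, which is automatic from the geometry of the LD-skew piece. Reducedness of $\mathcal{G}$ is immediate, since each $f_{a,b}$ is a squarefree quadratic binomial whose terms involve a specific pair of diagonal or anti-diagonal corners, so no term of one generator divides a term of another.

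I expect the main obstacle to be the combinatorial bookkeeping: each of Lemmas~\ref{intervalli4}--\ref{intervalli8} has four rotational variants and four internal sub-cases, and systematically matching each local picture in the closed path to the correct sub-case under the no-W-pentomino hypothesis requires patient enumeration. A secondary subtlety is that distinct LD-skew pieces in $\mathcal{R}$ can share vertices, so one must check that the definition of $Y$ remains consistent across overlapping contributions and that no cross-tetromino configuration forces contradictory order conditions. The conceptual heart of the argument is identifying the W-pentomino as the unique obstruction that eliminates the hard configurations, with the LD-skew tetrominoes picking up the slack through their prescribed vertices in $Y$.
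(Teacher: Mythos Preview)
Your overall plan---verify Buchberger's criterion via the lemmas of Section~\ref{Section: Properties}---matches the paper, but your identification of which configurations survive is wrong, and this leaves a genuine gap. It is not true that the configuration of Lemma~\ref{intervalli8} requires a W-pentomino. In the case $q=u$ (the upper-right corner of $[p,q]$ equals the lower-left corner of $[u,v]$), two inner intervals of the closed path meet at a single vertex; this occurs whenever the path bends, with $A_k$ having $q$ as its upper-right corner and $A_{k+2}$ having $q$ as its lower-left corner. The paper treats this case explicitly and, when the five cells $A_{k-1},\ldots,A_{k+3}$ form an $L$-configuration with $\gcd(\lt(f),\lt(g))\neq 1$, invokes Lemma~\ref{intervalli8} to carry out the reduction. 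So Lemma~\ref{intervalli8} is used, not avoided, and your sketch omits the verification that one of its four conditions holds under $<^Y$ at an $L$-configuration.

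Relatedly, you have the role of the no-W-pentomino hypothesis backwards. It does not rule out the Lemma~\ref{intervalli8} picture; rather, inside the $q=u$ analysis, after the $L$-configuration and LD-vertical-skew sub-cases are handled, the remaining possibility is that $\{A_{k-1},A_k,A_{k+1},A_{k+2}\}$ is an LD-horizontal skew tetromino, and one must locate $A_{k+3}$. If $A_{k+3}$ sat above $A_{k+2}$ the five cells $A_{k-1},\ldots,A_{k+3}$ would form a W-pentomino, so the hypothesis forces $A_{k+3}$ east of $A_{k+2}$, and this placement (together with $x_\cC,y_\cC\in Y$) is what yields $\gcd(\lt(f),\lt(g))=1$. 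Thus the hypothesis pins down a single cell in one sub-case rather than eliminating an entire class of pictures. As for Lemmas~\ref{intervalli6} and~\ref{intervalli7}, whatever prevents those shapes is the closed-path axiom itself (in particular the absence of a $2\times 2$ block of cells), not the absence of W-pentominoes; your attribution of their exclusion to the hypothesis is also off.
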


\begin{proof}
Let $f=x_px_q-x_rx_s$ and $g=x_ux_v-x_wx_z$ be the two binomials attached respectively to the inner intervals $[p,q]$ and $[u,v]$ of $\cP$. We prove that $S(f,g)$ reduces to $0$ modulo $\cG$ with respect to $<^{Y}_\mathrm{lex}$, examining all possible cases on $\{p,q,r,s\}\cap \{u,v,w,z\}$. \\
The case $\{p,q,r,s\}\cap \{u,v,w,z\}=\emptyset$ is trivial. If $|\{p,q,r,s\}\cap \{u,v,w,z\}|=2$, then the claim follows from Lemma \ref{intervalli2}. Assume that $|\{p,q,r,s\}\cap \{u,v,w,z\}|=1$ and that $[p,q]$ is not contained in $[u,v]$ or vice versa. 
 Suppose that $q=v$. For the structure of $\cP$ we may assume that $s\in ]z,v[$ and $w\in ]r,q[$, so there exists $k\in\{1,\dots,n\}$ such that $A_k=[p,q]\cap [u,v]$. Let $A_{k-1}$ be the cell of $\cP_{[p,q]}$ adjacent to $A_k$. If $A_{k-2}$ is at North of $A_{k-1}$ then we have the conclusion from (1) of Lemma \ref{intervalli4}. If $A_{k-2}$ is at West of $A_{k-1}$ then the claim follows either by being $\gcd(\lt(f),\lt(g))= 1$ or by applying (1) of Lemma \ref{intervalli4} if $\gcd(\lt(f),\lt(g))\neq 1$. The cases $r=w$, $p=u$ and $s=z$ can be proved similarly to the previous ones.
Suppose that $q=w$. We may assume that $u\in]s,q[$, because the arguments are similar when $s\in]u,w[$. Let $A_k$ be the cell of $\cP$ having $r$, $u$ as anti-diagonal corners and we denote by $A_{k-1}$ and $A_{k+1}$ respectively the cells of $\cP_{[p,q]}$ and $\cP_{[u,v]}$ adjacent to $A_k$. If $\{A_{k-2},A_{k-1},A_k,A_{k+1} \}$ is an LD-vertical skew tetromino or $\{A_{k-2},A_{k-1},A_k,A_{k+1},A_{k+2}\}$ is an $L$-configuration then $\gcd(\lt(f),\lt(g))= 1$. If $\{A_{k-1},A_k,A_{k+1},A_{k+2} \}$ is an LD-horizontal skew tetromino, then $\gcd(\lt(f),\lt(g))= x_w$ and applying (1) of Lemma \ref{intervalli5} we have the desired conclusion. Similar arguments hold in the cases $s=u$, $v=r$ and $z=p$. Suppose $q=u$ and let $A_k$ and $A_{k+2}$ be the cells of $\cP$ having respectively $q$ as upper right and lower left corner. If $\{A_{k-1},A_k,A_{k+1},A_{k+2} \}$ or $\{A_{k},A_{k+1},A_{k+2},A_{k+3} \}$ is an LD-vertical skew tetromino, then $\gcd(\lt(f),\lt(g))=1$. If $\{A_{k-1},A_{k},A_{k+1},A_{k+2},A_{k+3} \}$ is an $L$-configuration, the claim follows either by $\gcd(\lt(f),\lt(g))= 1$ or by applying Lemma \ref{intervalli8} if $\gcd(\lt(f),\lt(g))\neq 1$. If $\{A_{k-1},A_{k},A_{k+1},A_{k+2},A_{k+3} \}$ is not an $L$-configuration and does not contain an LD-vertical skew tetromino, then the only two possibilities are that either $\{A_{k-1},A_k,A_{k+1},A_{k+2} \}$ or $\{A_{k},A_{k+1},A_{k+2},A_{k+3} \}$ is an LD-horizontal skew tetromino. In both cases $\gcd(\lt(f),\lt(g))= 1$, in particular in the first case the claim follows since $\cP$ has not any $W$-pentomino, so $A_{k+3}$ is at East of $A_{k+2}$. The other cases $s=w$, $z=r$ or $v=p$ can be proved by similar arguments. Finally, it is easy to see that in such cases $\mathcal{G}$ is also the reduced Gr\"obner basis of $I_\cP$. 
\end{proof}

\noindent In Figure~\ref{Figure:without_W}(A) is shown an example of polyomino satisfying Theorem~\ref{without_W}. 

\begin{rmk}\rm In \cite{Trento2} the authors introduced the class of \emph{thin} polyominoes, that consists of all polyominoes not containing the configuration whose shape is a square made up of four cells. Such class can be viewed as a generalization of closed paths. We observe that the conclusion of the previous theorem does not hold in general for thin polyominoes, using the same monomial order. In fact, we can consider the thin polyomino in Figure~\ref{Figure:without_W}(B) and it is not difficult to show that the S-polynomial associated to the marked intervals does not reduce to 0.
\end{rmk}

\begin{figure}[h]
\centering
\subfloat[]{\includegraphics[scale=0.6]{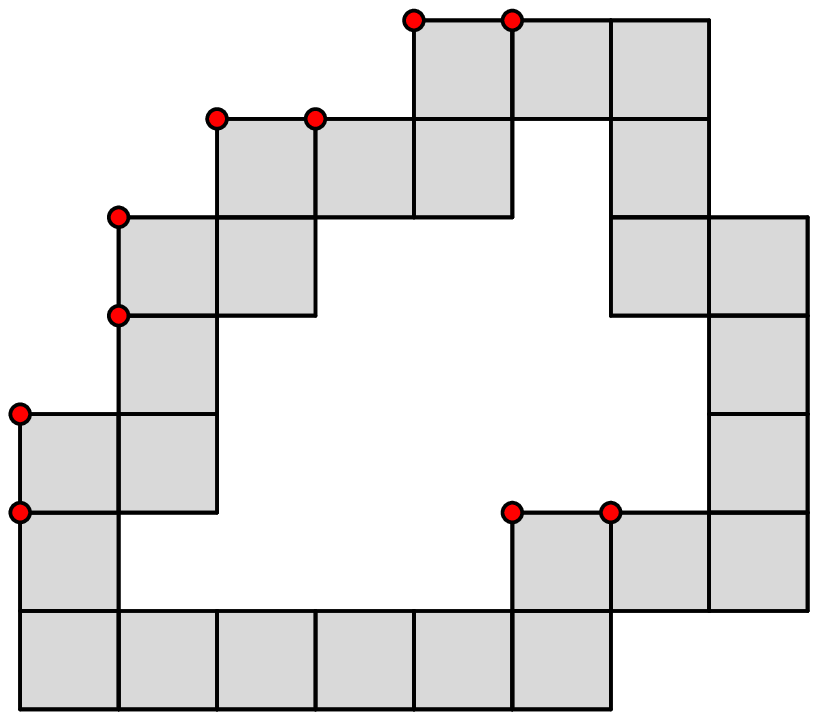}}\qquad \qquad
\subfloat[]{\includegraphics[scale=0.6]{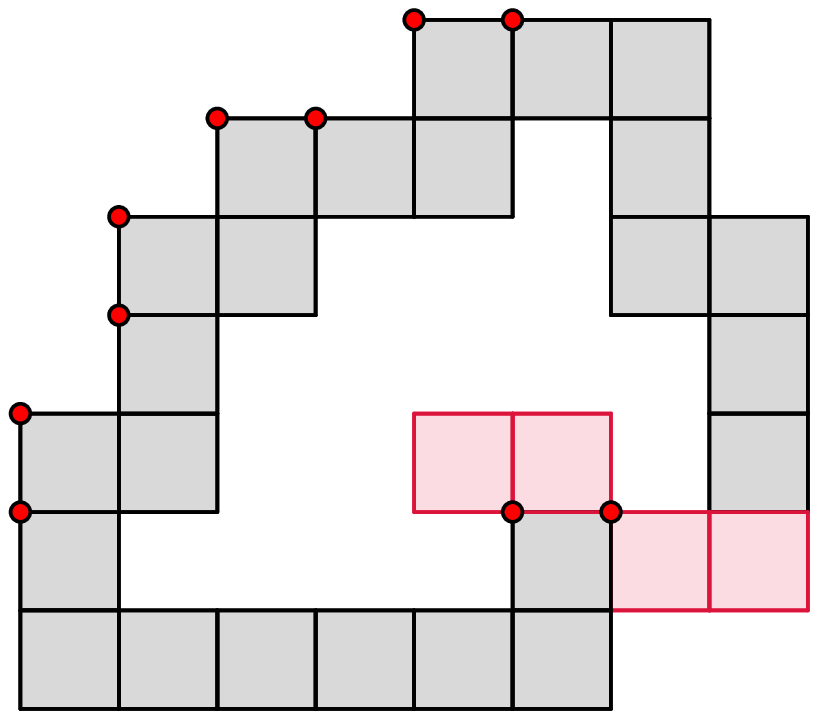}}
\caption{The highlighted points belong to $Y$}
\label{Figure:without_W}
\end{figure}

\begin{rmk}\rm
By the same arguments, the statement of Theorem~\ref{without_W} holds also for $Y=\bigcup_{\cC\in \mathcal{R}}\{a_\cC,b_\cC\}$.
\end{rmk}


\begin{thm}
Let $\cP$ be a closed path polyomino not containing any RW-heptomino. Let $\mathcal{R}_1$ be the set of all LD-horizontal and vertical skew hexominoes contained in $\cP$ and let $\mathcal{R}_2$ be the set of all W-pentominoes contained in $\cP$. Let $Y=(\bigcup_{\mathcal{D}\in \mathcal{R}_1}\{a_D,b_D\})\cup (\bigcup_{\mathcal{W}\in \mathcal{R}_2}\{x_W,y_W\})$. Then $\mathcal{G}$ is the reduced Gr\"obner basis of $I_\cP$ with respect to the monomial order $<^{Y}_\mathrm{lex}$.
\label{without_RW}
\end{thm}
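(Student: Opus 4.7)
The proof plan is to mirror the structure of Theorem~\ref{without_W}, using Buchberger's criterion together with a case analysis on the intersection $\{p,q,r,s\}\cap \{u,v,w,z\}$ for two generators $f=x_px_q-x_rx_s$ and $g=x_ux_v-x_wx_z$ attached to inner intervals $[p,q]$ and $[u,v]$. As before, the case of empty intersection is trivial because the leading terms are coprime, and the case of a two-element intersection is immediately settled by Lemma~\ref{intervalli2}. The substance of the argument therefore reduces to the one-element intersection case, together with the nested case in which one interval is contained in the other (handled inside the various sub-cases of the shared-corner analysis).

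The plan is then to break up the one-element intersection into the same sub-cases as in the proof of Theorem~\ref{without_W} (i.e., $q=v$, $q=w$, $q=u$, and their symmetric partners $r=w$, $s=u$, $v=r$, $z=p$, $s=w$, $z=r$, $v=p$), and in each sub-case to let $A_k$ be the distinguished cell of $\cP$ at the shared corner and to enumerate the possibilities for the surrounding cells $A_{k-2},A_{k-1},A_{k+1},A_{k+2},A_{k+3}$ dictated by the closed-path structure. In each enumerated local configuration I would either verify directly that $\gcd(\lt(f),\lt(g))=1$ (so that $S(f,g)$ reduces to $0$ trivially) or apply one of Lemmas \ref{intervalli4}, \ref{intervalli5}, \ref{intervalli6}, \ref{intervalli7}, \ref{intervalli8}, checking that the relevant $\mathsf{P}$-order condition is satisfied by the constructed set $Y$.

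The two ingredients that replace the role of the hypothesis used in Theorem~\ref{without_W} are: (i) the inclusion of the vertices $x_W,y_W$ of each W-pentomino in $Y$, which forces the leading term of a generator attached to an inner interval meeting a W-pentomino at a corner to be the monomial supported on $Y$, so the conditions $h,\delta<^{\mathsf{P}}\alpha$ or $h,\delta<^{\mathsf{P}} d$ (and their rotations/symmetries) in Lemmas~\ref{intervalli4}--\ref{intervalli8} are met; and (ii) the inclusion of the vertices $a_D,b_D$ of each LD-skew hexomino in $Y$, which plays the same role for the slightly larger local configurations that appear when a W-pentomino sits adjacent to a block of rank three. The absence of RW-heptominoes is then precisely the hypothesis that prevents the emergence of a local configuration in which none of Lemmas~\ref{intervalli4}--\ref{intervalli8} apply, exactly as the absence of W-pentominoes played that role in Theorem~\ref{without_W}.

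The main obstacle I anticipate is the sub-case $q=u$ (two intervals meeting only at a corner vertex), where the local structure around $A_k$ may now contain a W-pentomino; here one must show that the enlarged $Y$ forces the correct leading-term selection in each of the possible arrangements of $A_{k-1},A_k,A_{k+1},A_{k+2},A_{k+3}$, and that the excluded RW-heptomino is precisely the obstruction that would otherwise produce a configuration not covered by Lemma~\ref{intervalli8}. After completing this case analysis and observing that no generator in $\cG$ divides a non-leading monomial of another, the reducedness of the Gröbner basis follows as in Theorem~\ref{without_W}.
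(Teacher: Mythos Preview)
Your plan is correct and matches the paper's own proof essentially line for line: the paper also reduces to Buchberger's criterion, disposes of the empty and two-element intersection cases exactly as you describe, refers the sub-cases $q=v$, $r=w$, $p=u$, $s=z$, $q=w$, $s=u$, $v=r$, $z=p$ back to the argument of Theorem~\ref{without_W}, and concentrates on $q=u$ (and its symmetric partners $s=w$, $z=r$, $v=p$), where it enumerates the local configurations around $A_k,\dots,A_{k+3}$ and uses the absence of RW-heptominoes together with Lemma~\ref{intervalli8} and the choice of $Y$ exactly as you anticipate. Your identification of the $q=u$ case as the only place where the new ingredients $\{x_W,y_W\}$ and $\{a_D,b_D\}$ and the RW-heptomino hypothesis are genuinely needed is precisely what the paper does.
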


\begin{proof}

Let $f=x_px_q-x_rx_s$ and $g=x_ux_v-x_wx_z$ be the two binomials attached respectively to the inner intervals $[p,q]$ and $[u,v]$ of $\cP$. We discuss the case $|\{p,q,r,s\}\cap \{u,v,w,z\}|=1$, where $[p,q]$ is not contained in $[u,v]$ or vice versa. The cases $q=v$, $r=w$, $p=u$ and $s=z$, as well as $q=w$, $s=u$, $v=r$ and $z=p$, can be proved as in Theorem \ref{without_W}. Suppose $q=u$ and let $A_k$ and $A_{k+2}$ be the cells of $\cP$ having respectively $q$ as upper right and lower left corner. If $\{A_{k-1},A_{k},A_{k+1},A_{k+2},A_{k+3} \}$ is an $L$-configuration the claim follows either if $\gcd(\lt(f),\lt(g))= 1$ or by applying Lemma \ref{intervalli8} if $\gcd(\lt(f),\lt(g))\neq 1$. If $\{A_{k-2},A_{k-1},A_k,A_{k+1},A_{k+2},A_{k+3} \}$ or $\{A_{k-1},A_k,A_{k+1},A_{k+2},A_{k+3},A_{k+4} \}$ is an LD-horizontal or vertical skew hexomino, then $\gcd(\lt(f),\lt(g))=1$. Since there does not exist any RW-heptomino, the last possibilities consist in being $A_{k-1}$, $A_k$, $A_{k+1}$ or $A_{k+2}$ the middle cell of a $W$-pentomino. In all these cases we have the desired conclusion either if $\gcd(\lt(f),\lt(g))= 1$ or by applying Lemma \ref{intervalli8} if $\gcd(\lt(f),\lt(g))\neq 1$. The cases $s=w$, $z=r$ or $v=p$ can be proved by similar arguments.
\end{proof}

\begin{rmk}\rm
With the same arguments, the statement of Theorem~\ref{without_RW} holds also considering $Y=(\bigcup_{\mathcal{D}\in \mathcal{R}_1}\{a_D,b_D\})\cup (\bigcup_{\mathcal{W}\in \mathcal{R}_2}\{x_W,z_W\})$.
\end{rmk}

\noindent Given a closed path polyomino $\cP$ containing both W-pentominoes and RW-heptominoes, our aim is to find a $\mathsf{P}$-order $<^Y$, for a suitable set $Y\subset V(\cP)$, such that $\mathcal{G}$ is the Gr\"obner basis of $I_\cP$ with respect to the monomial order $<^{Y}_{\mathrm{lex}}$. We are going to define the set $Y$ by combining the previous construction and the highlighted points in Figures~\ref{Figura:pentomino}, \ref{Figura:skew-hexomino}, \ref{Figura:heptomino}, and proceeding iteratively from the structure of the polyomino and the arrangement of the cells. In order to simplify notations and writings, we summarize in the table in Figure~\ref{table} the arrangements with highlighted points already introduced in the previous definitions that are useful to define the new set $Y$. We build up the set $Y$ using the algorithm explained below, for which it is also important to consider the configurations described in Figure~\ref{table1IF} and Figure~\ref{table2IF}. 

\begin{figure}[h!]
	\begin{tabular}{|c|c|c|c|c|}
		\hline & I & II & III & IV \\
		\hline
		A & \includegraphics[scale=0.65]{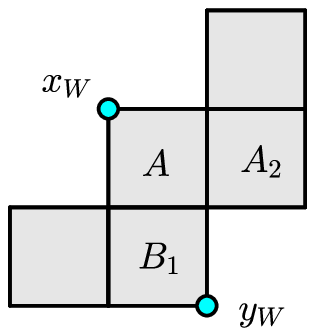} & \includegraphics[scale=0.65]{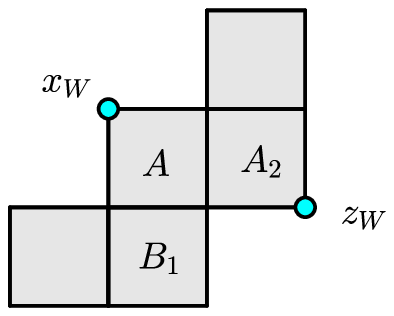} & \includegraphics[scale=0.65]{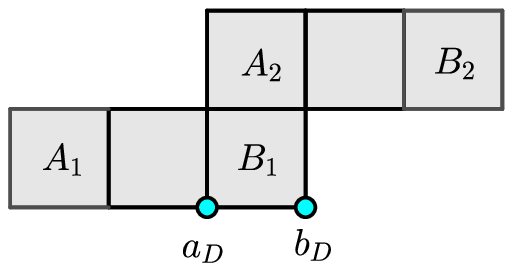}  & \includegraphics[scale=0.65]{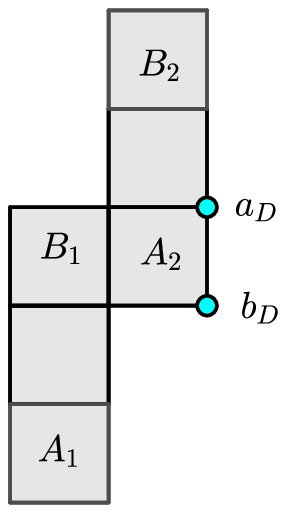} \\ \hline
		B & \includegraphics[scale=0.65]{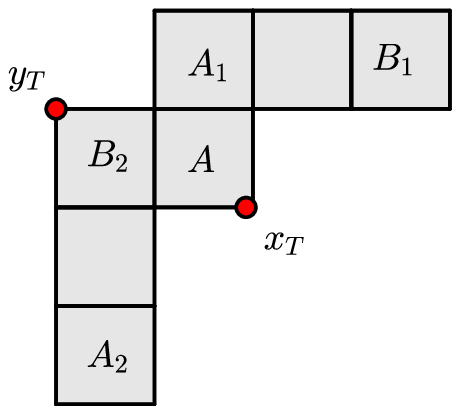} & \includegraphics[scale=0.65]{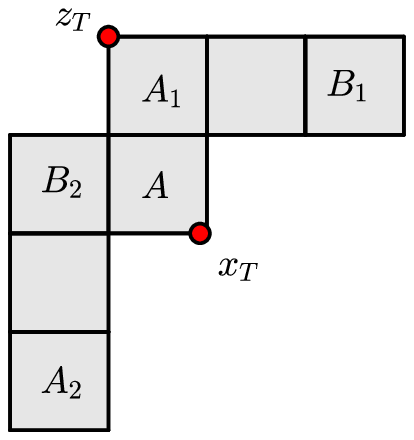} & \includegraphics[scale=0.65]{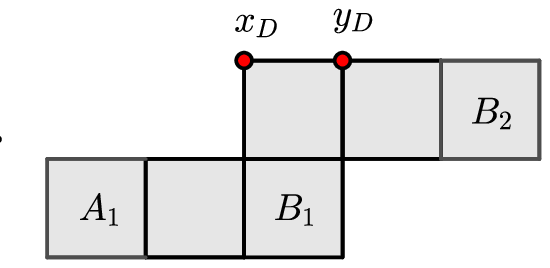} & \includegraphics[scale=0.65]{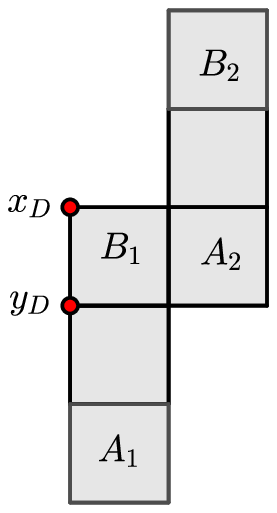}\\
		\hline
	\end{tabular}
	\caption{}
	\label{table}
\end{figure}

\begin{alg} \rm
	Let $\cP$ be a closed path polyomino, whose sequence of cells is $A_1,A_2,\ldots,A_n,A_{n+1}$ (with $A_1=A_{n+1}$) and containing both W-pentominoes and RW-heptominoes. Let $i,j\in \{1,2,\ldots,n,n+1\}$ with $i<j$. We define $Y_{i,j}\subset V(\cP)$ be the set provided by the algorithmic scheme described below:
	
	\begin{enumerate}
		\item Start with $Y_{i,j}=\emptyset$.
		\item Define $\mathcal{Q}=\{q\in \{i,\ldots,j\}\mid A_{q}\ \mbox{is the middle cell of a RW-heptomino}\}$.
		\item If $\mathcal{Q}\neq \emptyset$ define $q_1=\min \mathcal{Q}$, otherwise define $q_1=j$.
		\item FOR $k\in\{i,\ldots,q_1\}$ DO:
		\begin{enumerate}
			\item  IF $A_k$ is the middle cell of a W-pentomino THEN $Y_{i,j}=Y_{i,j}\cup \{x_W,z_W\}$ with reference to II-A of Figure~\ref{table}.
			\item IF $A_k,A_{k+1},\ldots,A_{k+6}$ is a sequence of cells of an LD-horizontal skew hexomino THEN $Y_{i,j}=Y_{i,j}\cup \{a_D,b_D\}$ with reference to III-A of Figure~\ref{table}.
			\item IF $A_k,A_{k+1},\ldots,A_{k+6}$ is a sequence of cells of an LD-vertical skew hexomino THEN $Y_{i,j}=Y_{i,j}\cup \{a_D,b_D\}$ with reference to IV-A of Figure~\ref{table}.
		\end{enumerate}
		\item Define $\mathcal{R}=\{r\in \{q_1+1,\ldots,j\}\mid A_{r}\ \mbox{is the middle cell of a W-pentomino}\}$.
		\item If $\mathcal{R}\neq \emptyset$ define $r_1=\min \mathcal{R}$, otherwise define $r_1=j$.
		\item Define $Q=q_1$ and $R=r_1$.
		\item Consider the RW-heptomino with middle cell $A_Q$ and let $M=\max\{m\in \{i,\ldots,Q\}\mid A_m\cap Y_{i,j}\neq \emptyset\}$.
		\item FOR $k\in\{Q,\ldots,R\}$ DO:
		\begin{figure}[h]
			\begin{tabular}{|c|c|}
				\hline
				\includegraphics[scale=0.65]{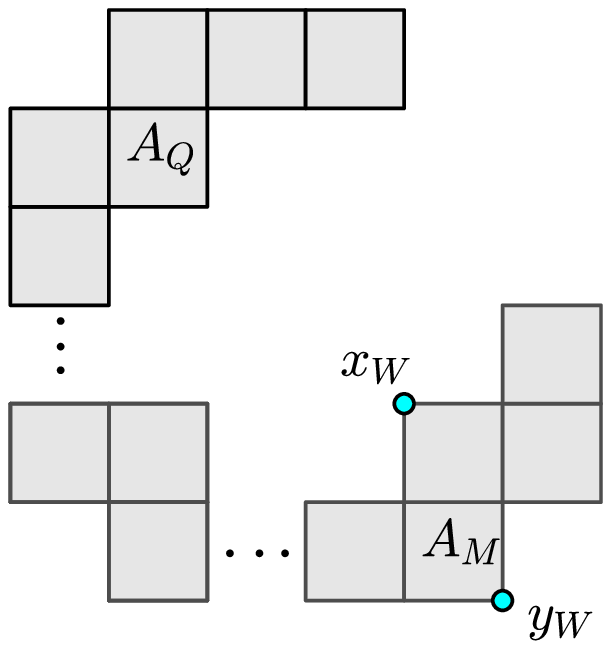} & \includegraphics[scale=0.65]{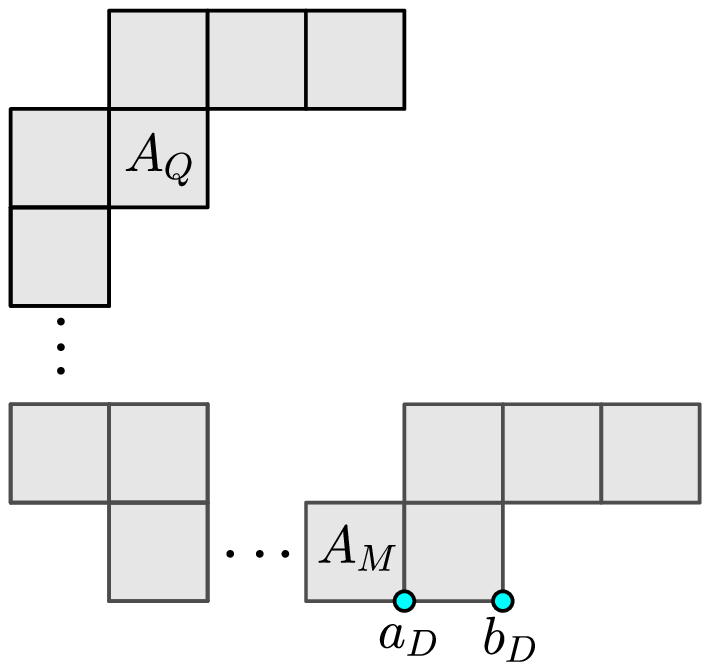} \\ \hline \includegraphics[scale=0.65]{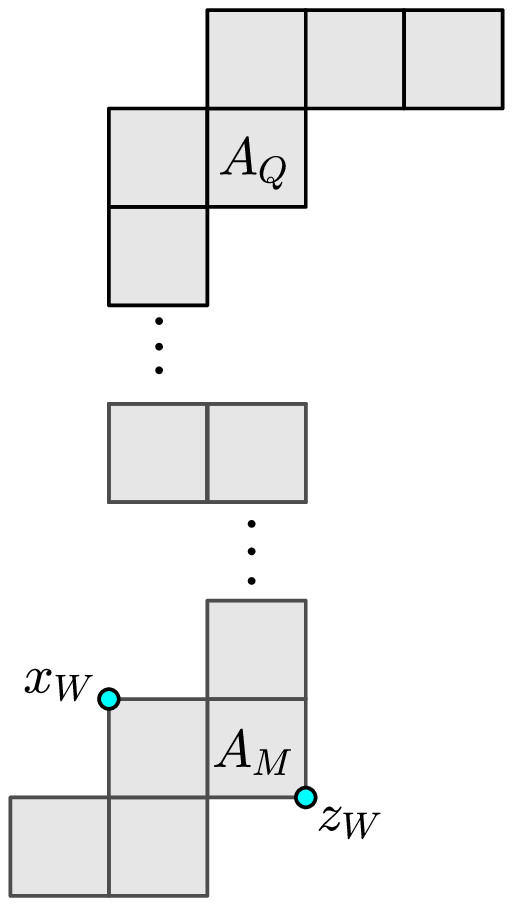}  & \includegraphics[scale=0.65]{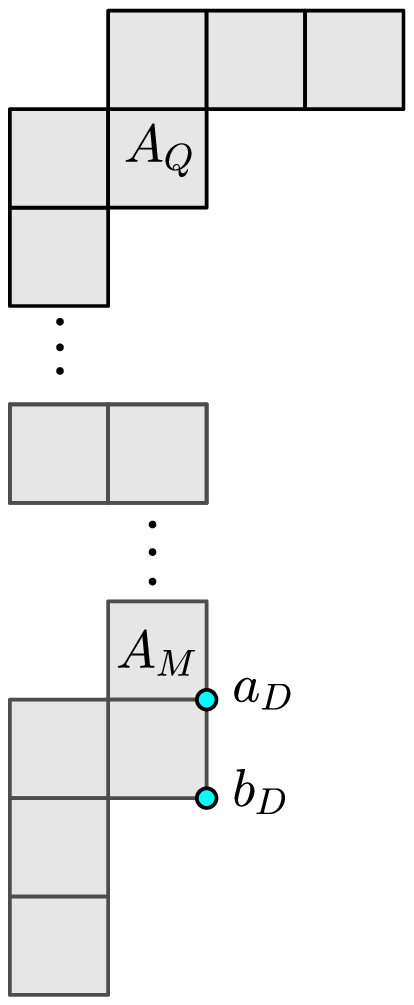} \\ 
				\hline
			\end{tabular}
			\caption{Conflicting configurations with I-B}
			\label{table1IF}
		\end{figure}
		\begin{enumerate}
				\item IF $A_k$ is the middle cell of an RW-heptomino THEN 
			\begin{itemize}
				\item[IF] $A_M$ and $A_{Q}$ do not occur as in the configurations of Figure~\ref{table1IF} 
				\item[THEN] $Y_{i,j}=Y_{i,j}\cup \{x_T,y_T\}$ with reference to I-B of Figure~\ref{table}
				\item[ELSE] $Y_{i,j}=Y_{i,j}\cup \{x_T,z_T\}$ with reference to II-B of Figure~\ref{table}. 
			\end{itemize}
			\item IF $A_k,A_{k+1},\ldots,A_{k+6}$ is a sequence of cells of an LD-horizontal skew hexomino THEN $Y_{i,j}=Y_{i,j}\cup \{x_D,y_D\}$ with reference to III-B of Figure~\ref{table}.
			\item IF $A_k,A_{k+1},\ldots,A_{k+6}$ is a sequence of cells of an LD-vertical skew hexomino THEN $Y_{i,j}=Y_{i,j}\cup \{x_D,y_D\}$  with reference to IV-B of Figure~\ref{table}.
			\item IF $R=j$ THEN RETURN $Y_{i,j}$.
		\end{enumerate}
	
		\item Define $\mathcal{Q}=\{q\in \{r_1+1,\ldots,j\}\mid A_{q}\ \mbox{is the middle cell of a RW-heptomino}\}$.
		\item If $\mathcal{Q}\neq \emptyset$ define $q_2=\min \mathcal{Q}$, otherwise define $q_2=j$.
		\item Define $R=r_1$ and $Q=q_2$.
		\item Consider the W-pentomino with middle cell $A_R$ and let $M=\max\{m\in \{i,\ldots,R\}\mid A_m\cap Y_{i,j}\neq \emptyset\}$.
		\item  FOR $k\in\{R,\ldots,Q\}$ DO:
		\begin{enumerate}
			\item IF $A_k$ is the middle cell of a W-pentomino THEN
			\begin{itemize}
				\item[IF] $A_M$ and $A_{Q}$ do not occur as in the configurations of Figure~\ref{table2IF} 
				\item[THEN] $Y_{i,j}=Y_{i,j}\cup \{x_W,y_W\}$ with reference to I-A of Figure~\ref{table}
				\item[ELSE] $Y_{i,j}=Y_{i,j}\cup \{x_W,z_W\}$ with reference to II-A of Figure~\ref{table}.
			\end{itemize}
			\item IF $A_k,A_{k+1},\ldots,A_{k+6}$ is a sequence of cells of an LD-horizontal skew hexomino THEN $Y_{i,j}=Y_{i,j}\cup \{a_D,b_D\}$ with reference to III-A of Figure~\ref{table}.
			\item IF $A_k,A_{k+1},\ldots,A_{k+6}$ is a sequence of cells of an LD-vertical skew hexomino THEN $Y_{i,j}=Y_{i,j}\cup \{a_D,b_D\}$ with reference to IV-A of Figure~\ref{table}.
			\item IF $Q=j$ THEN RETURN $Y_{i,j}$.
		\end{enumerate}
	
	\begin{figure}[h]
		\begin{tabular}{|c|c|}
			\hline
			\includegraphics[scale=0.65]{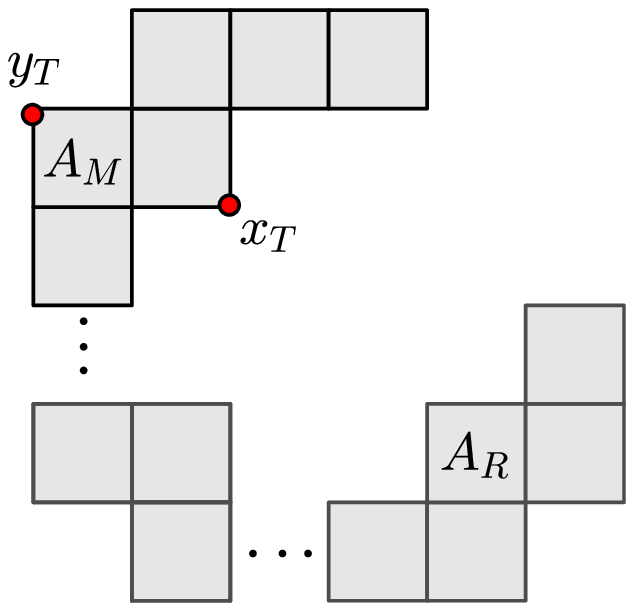} & \includegraphics[scale=0.65]{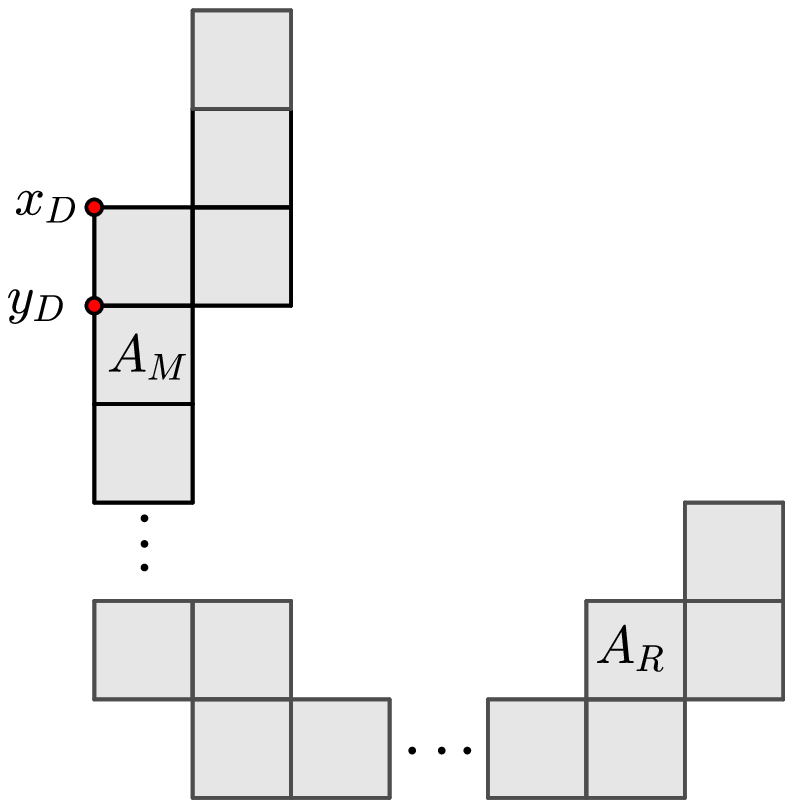} \\ \hline \includegraphics[scale=0.65]{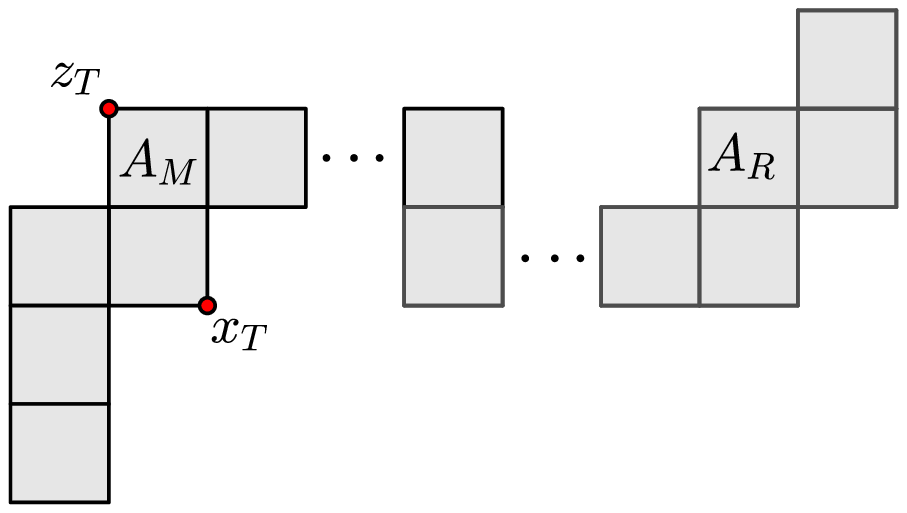}  & \includegraphics[scale=0.65]{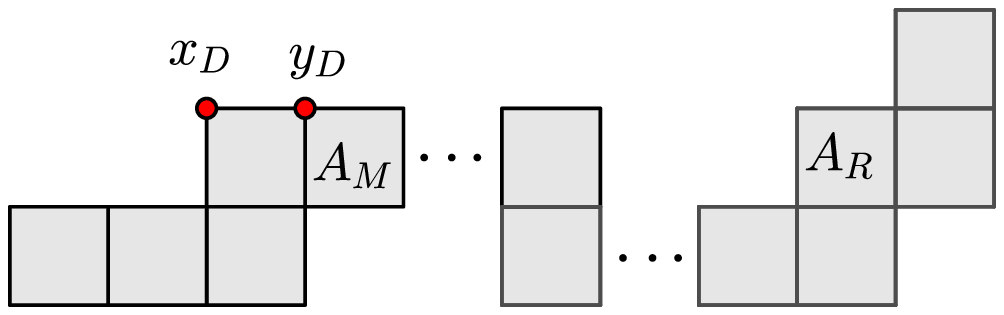} \\ 
			\hline
		\end{tabular}
		\caption{Conflicting configurations with I-A}
		\label{table2IF}
	\end{figure}

		\item $\ell=2$.
		\item WHILE $\ell >1$ DO
		\begin{enumerate}
			\item Define $\mathcal{R}=\{r\in \{q_\ell+1,\ldots,j\}\mid A_{r}\ \mbox{is the middle cell of a W-pentomino}\}$.
			\item If $\mathcal{R}\neq \emptyset$ define $r_\ell=\min \mathcal{R}$, otherwise define $r_\ell=j$.
			\item Define $Q=q_\ell$ and $R=r_\ell$.
			\item $M=\max\{m\in \{i,\ldots,Q\}\mid A_m\cap Y_{i,j}\neq \emptyset\}$.
			\item Execute the instructions in (9).
			\item Define $\mathcal{Q}=\{q\in \{r_\ell+1,\ldots,j\}\mid A_{q}\ \mbox{is the middle cell of a RW-heptomino}\}$.
			\item If $\mathcal{Q}\neq \emptyset$ define $q_{\ell+1}=\min \mathcal{Q}$, otherwise define $q_{\ell+1}=j$.
			\item Define $R=r_\ell$ and $Q=q_{\ell+1}$.
			\item $M=\max\{m\in \{i,\ldots,R\}\mid A_m\cap Y_{i,j}\neq \emptyset\}$.
			\item Execute the instructions in in (14).
			\item $\ell=\ell+1$.
		\end{enumerate}
		\item END
	\end{enumerate}
	\noindent Observe that, since $r_\ell<r_{\ell+1}$ and $q_\ell<q_{\ell+1}$ for all $\ell \in \mathbb{N}$ then there exists $\overline{\ell}$ such that $r_{\overline{\ell}}=j$ or $q_{\overline{\ell}}=j$, so the procedure stops and the set $Y_{i,j}$ is returned.  
	\label{def:Yij}
\end{alg}

\begin{defn}\rm 
Let $\cP$ be a closed path polyomino containing both W-pentominoes and RW-heptominoes. Consider a $W$-pentomino $\cW$ of $\cP$ and suppose that $\cW$ contains the cells $A_1$, $A_2$, $A_3$, $A_4$ and $A_5$, labelled bottom up as in Figure \ref{Figura:W pentomino definizione Yij}. We put $L=Y_{2,n+1}$.  In Figure \ref{Figure:Esempio costruzione Y_ij} we make in evidence, for instance, the points belonging to $L$.
\begin{figure}[h]
\includegraphics[scale=0.7]{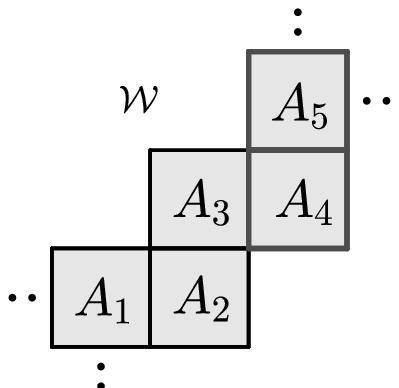}
\caption{}
\label{Figura:W pentomino definizione Yij}
\end{figure}
\label{def:W-pentomino}
\end{defn}


\begin{figure}[h]
	\centering
	\includegraphics[scale=0.55]{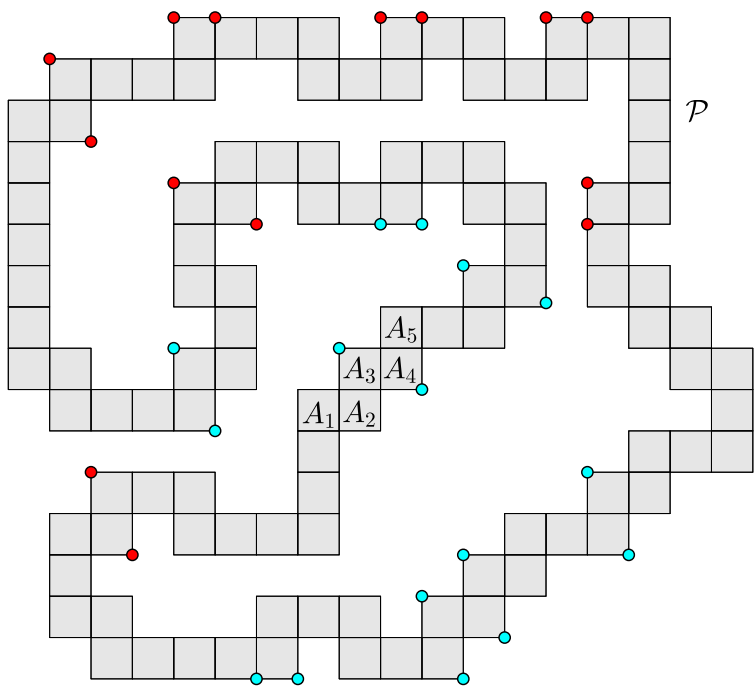}
	\caption{The set $Y_{2,n+1}\subset V(\cP)$ consists of the highlighted points}
	\label{Figure:Esempio costruzione Y_ij}
\end{figure}

\begin{thm}
Let $\cP$ be a closed path. Suppose that $\cP$ contains a W-pentomino and an RW-heptomino and let $L$ be the set given in Definition~\ref{def:W-pentomino}. Then $\mathcal{G}$ is the reduced Gr\"obner basis of $I_\cP$ with respect to $<^{L}_\mathrm{lex}$.
\label{teorema: W_pentomino + RW}
\end{thm}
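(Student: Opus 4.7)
The plan is to mirror the Buchberger-based strategy used in Theorems~\ref{without_W} and~\ref{without_RW}, so I would pick two generators $f = x_px_q - x_rx_s$ and $g = x_ux_v - x_wx_z$ attached to inner intervals $[p,q]$ and $[u,v]$ of $\cP$, and verify that $S(f,g)$ reduces to $0$ modulo $\mathcal{G}$ with respect to $<^{L}_\mathrm{lex}$. The case $\{p,q,r,s\}\cap\{u,v,w,z\}=\emptyset$ is immediate, and the case $|\{p,q,r,s\}\cap\{u,v,w,z\}|=2$ is settled by Lemma~\ref{intervalli2}. The substance of the argument lies in the cardinality-one case, where one must further assume that neither interval is contained in the other; here the positions $q=v$, $r=w$, $p=u$, $s=z$ (as well as $q=w$, $s=u$, $v=r$, $z=p$) can be handled exactly as in the proof of Theorem~\ref{without_W} via Lemmas~\ref{intervalli4} and~\ref{intervalli5}, since the local analysis involves at most one cell flanking the shared corner and the relevant vertices lie in $L$ by construction.

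The delicate situations are the shared-corner cases $q=u$, $s=w$, $z=r$, $v=p$, corresponding to the hypothesis of Lemma~\ref{intervalli8}. In this case I would let $A_k$ and $A_{k+2}$ be the cells of $\cP$ having the shared vertex as upper-right and lower-left corner respectively, and split according to the local shape of $A_{k-2},A_{k-1},A_k,A_{k+1},A_{k+2},A_{k+3},A_{k+4}$. The possibilities are: an $L$-configuration, an LD-horizontal or vertical skew hexomino, or a W-pentomino or an RW-heptomino whose middle cell is one of $A_{k-1},A_k,A_{k+1},A_{k+2}$. In each of these situations, either $\gcd(\lt(f),\lt(g))=1$ and there is nothing to prove, or one must verify that the monomial order $<^L_\mathrm{lex}$ satisfies one of the hypotheses (1)--(4) of Lemma~\ref{intervalli8}. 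This amounts to checking that the vertices selected by Algorithm~\ref{def:Yij} in I-A, II-A, I-B, II-B, III-A/B, IV-A/B of Figure~\ref{table} correctly place the two relevant points above all competitors in $V(\cP)\setminus L$, and also above the remaining points of $L$ via the $<^{1}$-ordering.

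The main obstacle is verifying global consistency of the algorithmic construction of $L$: because the set is assembled as we walk around the closed path, each newly encountered W-pentomino or RW-heptomino inherits constraints from its predecessor through possibly shared cells, and this is precisely what Figures~\ref{table1IF} and~\ref{table2IF} tabulate as \emph{conflicting configurations}. The algorithm resolves each conflict by toggling between the variants I and II of the highlighted-point pattern; I would prove that this toggle always suffices by arguing locally on two consecutive configurations at a time, showing that the choice dictated by the algorithm makes the hypothesis of Lemma~\ref{intervalli8} (or the appropriate one among Lemmas~\ref{intervalli4}--\ref{intervalli7}) hold simultaneously for every $S$-pair $(f,g)$ whose intervals meet the shared cells. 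A short induction on $\ell$ (the loop counter in Algorithm~\ref{def:Yij}) then propagates the compatibility all the way around $\cP$.

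Once all $S$-pairs reduce to $0$, Buchberger's criterion gives that $\mathcal{G}$ is a Gröbner basis. Reducedness is then immediate: each element of $\mathcal{G}$ is a binomial $x_px_q-x_rx_s$ whose two monomials are squarefree products of four distinct variables, and neither $x_px_q$ nor $x_rx_s$ is divisible by the leading term of any other generator of $\mathcal{G}$, since the latter also consist of products of two variables corresponding to corners of a different inner interval. Thus the set $\mathcal{G}$ is also the reduced Gröbner basis with respect to $<^L_\mathrm{lex}$, completing the proof.
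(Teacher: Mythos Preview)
Your proposal is correct and follows essentially the same approach as the paper: the paper's own proof is extremely terse, merely stating that one argues as in Theorem~\ref{without_RW} and that in every case either $\gcd(\lt(f),\lt(g))=1$ or one of the lemmas of Section~\ref{Section: Properties} applies, with the construction of $L$ in Definitions~\ref{def:Yij} and~\ref{def:W-pentomino} ensuring the required hypotheses. Your write-up expands this sketch with the explicit case analysis and the role of the conflicting-configuration toggle, which is exactly the content the paper leaves implicit; one minor wording slip is that each monomial in a generator is a product of \emph{two} variables (the four variables $p,q,r,s$ are distinct across the binomial), but your reducedness argument is otherwise the standard one.
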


\begin{proof}
 Let $f$ and $g$ be the two binomials attached respectively to the inner intervals $[p,q]$ and $[u,v]$ of $\cP$. It suffices to show that $S(f,g)$ reduces to $0$ modulo $\cG$ in every case. Observe that the desired claim follows from Definitions \ref{def:Yij} and \ref{def:W-pentomino}, arguing as in Theorem \ref{without_RW}. In fact,  we always have that either $\gcd(\lt(f),\lt(g))= 1$ or, if $\gcd(\lt(f),\lt(g))\neq 1$, it is sufficient to apply the lemmas of Section \ref{Section: Properties}.
\end{proof}

\begin{thm}
Let $\cP$ be a closed path polyomino having an $L$-configuration or a ladder of at least three steps, or equivalently having no zig-zag walks. Then $K[\cP]$ is a normal Cohen-Macaulay domain.
\end{thm}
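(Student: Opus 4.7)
The plan is to combine three ingredients: the primality of $I_\cP$ established in \cite{Cisto_Navarra}, the squarefree Gr\"obner basis obtained in Section~\ref{Section: Grobner basis of polyomino ideal of closed path}, and two classical results about toric ideals and affine semigroup rings.

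First, the hypothesis that $\cP$ contains an L-configuration or a ladder of at least three steps is, by \cite{Cisto_Navarra}, equivalent to the absence of zig-zag walks in $\cP$, and in turn equivalent to the primality of $I_\cP$. Hence $K[\cP]$ is a domain, and $I_\cP$ can be regarded as a toric (lattice) ideal.

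Second, depending on whether $\cP$ contains W-pentominoes and/or RW-heptominoes, exactly one of Theorems~\ref{without_W}, \ref{without_RW} or \ref{teorema: W_pentomino + RW} applies and produces a monomial order $\ord$ with respect to which $\mathcal{G}$ is the reduced Gr\"obner basis of $I_\cP$. Since every $f \in \mathcal{G}$ has the form $x_a x_b - x_c x_d$ with $\{a,b\} \cap \{c,d\} = \emptyset$, the leading term $\lt(f)$ is a squarefree product of two distinct indeterminates. Therefore the initial ideal of $I_\cP$ with respect to $\ord$ is a squarefree monomial ideal generated in degree two.

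Finally, I would invoke two classical theorems. A well-known result of Sturmfels on toric ideals states that if a toric ideal admits a squarefree initial monomial ideal (equivalently, a regular unimodular triangulation of the associated polytope), then the corresponding affine semigroup ring is normal; combined with the previous step this yields the normality of $K[\cP]$. Hochster's theorem then guarantees that every normal affine semigroup ring is Cohen-Macaulay, completing the argument. The only nontrivial work, and what would in other settings be the main obstacle, has already been dealt with in Sections~\ref{Section: Properties} and \ref{Section: Grobner basis of polyomino ideal of closed path}: namely, the construction of the appropriate $\mathsf{P}$-order in each combinatorial configuration of cells, and the verification that all relevant $S$-polynomials reduce to zero. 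Once the squarefree reduced Gr\"obner basis is in hand, normality and the Cohen-Macaulay property follow essentially for free from the machinery just recalled.
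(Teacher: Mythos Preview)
Your proposal is correct and follows essentially the same approach as the paper: use the squarefree Gr\"obner basis from Section~\ref{Section: Grobner basis of polyomino ideal of closed path} together with the toric presentation from \cite{Cisto_Navarra}, then apply Sturmfels' criterion (stated in the paper as \cite[Corollary 4.26]{binomial ideals}) for normality and Hochster's theorem (stated as \cite[Theorem 6.3.5]{Bruns_Herzog}) for the Cohen--Macaulay property. Your case split according to which of Theorems~\ref{without_W}, \ref{without_RW}, \ref{teorema: W_pentomino + RW} applies is in fact slightly more explicit than the paper's own write-up, which cites only Theorem~\ref{teorema: W_pentomino + RW}.
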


\begin{proof}
From Theorem~\ref{teorema: W_pentomino + RW} we obtain that there exists a monomial order $\prec$ such that $\mathcal{G}$ is the Gr\"obner basis of $I_\cP$ with respect to $\prec$, in particular $I_\cP$ admits a squarefree initial ideal  with respect to some monomial order. Since $\cP$ has an $L$-configuration or a ladder of three steps, from \cite{Cisto_Navarra} we have that $I_{\cP}$ is a toric ideal. By \cite[Corollary 4.26]{binomial ideals} we obtain that $K[\cP]$ is normal and by \cite[Theorem 6.3.5]{Bruns_Herzog} we obtain that $K[\cP]$ is Cohen-Macaulay.
\end{proof}

\begin{rmk}\rm
	In \cite{def balanced} the authors proved that if $\cP$ is a balanced polyomino, equivalently $\cP$ is simple, then the universal Gr\"obner basis is squarefree. In general this fact does not hold for a non-simple polyomino. Consider the closed path $\cP$ in Figure \ref{Immagine: esempio polimino Graver}.
	\begin{figure}[h]
		\centering
		\includegraphics[scale=1]{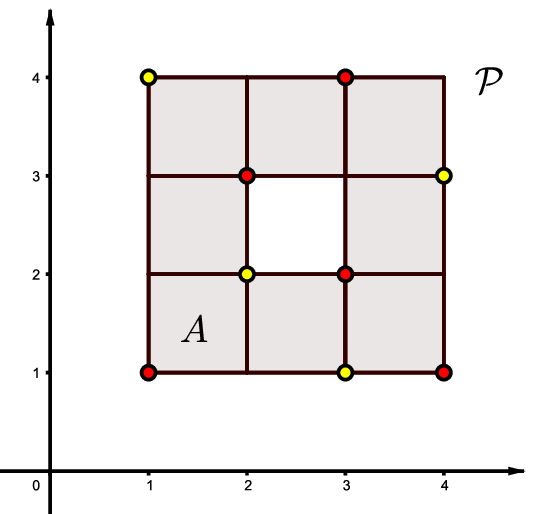}
		\caption{}
		\label{Immagine: esempio polimino Graver}
		\end{figure}
	Let $\{V_1,V_2,V_3,V_4\}$ and $\{H_1,H_2,H_3,H_4\}$ be respectively the sets of the maximal vertical and horizontal edge intervals of $\cP$ such that $r=(i,j)\in V_i\cap H_j$, and let $\{v_1,v_2,v_3,v_4\}$ and $\{h_1,h_2,h_3,h_4\}$ be the associated sets of the variables. Let $w$ be another variable different from $v_i$ and $h_j$. We recall from \cite{Cisto_Navarra} that $I_{\cP}=J_{\cP}$, where $J_{\cP}$ is the kernel of $\phi$, defined as 
	\begin{align*}
	\phi: K[x_{ij}:(i,j)\in V(\cP)] &\longrightarrow K[\{v_i,h_j,w\}:i,j \in \{1,2,3,4\}]\\
	\phi(x_{ij}&)=v_ih_jw^k
	\end{align*}
    where $k=0$ if $(i,j)\notin A$, and $k=1$, if $(i,j)\in A$.\\
  Consider the binomial $f=x_{11}x_{23}x_{32}x_{34}x_{41}-x_{14}x_{22}x_{31}^2x_{43}$ attached to the vertices in red and yellow. Observe that $f\in I_\cP$ because $\phi(x_{11}x_{23}x_{32}x_{34}x_{41})=\phi(x_{14}x_{22}x_{31}^2x_{43})$. We show that $f$ is primitive, that is there does not exist any binomial $g=g^+-g^-$ in $I_{\cP}$ with $g\neq f$ such that $g^+|x_{11}x_{23}x_{32}x_{34}x_{41}$ and $g^-|x_{14}x_{22}x_{31}^2x_{43}$. Suppose by contradiction that there exists such a binomial. 
Observe that $2<\deg(g)< 5$, since $f\neq g$ and all binomials of degree two satisfying the primitive conditions are not inner $2$-minors. It is sufficient to prove that $x_{11}$ (resp. $x_{22}$) cannot divide $g^+$ (resp. $g^-$). If that happens, then $w$ divides $\phi(g^+)$, which is equal to $\phi(g^-)$, so $x_{22}$ divides $g^-$. 
  Since $g\in I_{\cP}=J_{\cP}$, in particular $\phi(g^+)=\phi(g^-)$, we obtain that $g^+=x_{11}x_{23}x_{32}x_{34}x_{41}$ and $g^-=x_{14}x_{22}x_{31}^2x_{43}$ from easy calculations. Hence $f=g$, a contradiction. In conclusion we have that $f$ is a primitive binomial of $I_{\cP}$. Since for a toric ideal the universal Gr\"obner basis coincides with the Graver basis (see \cite{Graver=Universal for toric}), the primitive binomials of $I_{\cP}$ form the universal Gr\"obner basis $\mathscr{G}$ of $I_{\cP}$. Since $f$ is a primitive binomial of $I_{\cP}$, it follows that $\mathscr{G}$ is not squarefree. Anyway $I_{\cP}$ is a radical ideal which admits a squarefree initial ideal with a different monomial ordering, for instance with respect to  $<^1_{\mathrm{lex}}$, since the set of generator of $I_{\cP}$ is the reduced Gr\"obner basis by \cite[Theorem 4.1]{Qureshi}.
\end{rmk}

\noindent We conclude providing some questions which follow immediately from the results of this paper.
\begin{itemize}
\item We ask if the initial ideal of $I_{\cP}$, attached to a (weakly) closed path, with respect to the monomial orders $<^1_{\mathrm{lex}}$ and $<^2_{\mathrm{lex}}$ defined in \cite{Qureshi} is squarefree.
\item With reference to \cite{Cisto_Navarra2}, we ask if also for all weakly closed path polyominoes there exist some monomial orders such that the set of the generators of the ideal is the reduced Gr\"obner basis. 
\end{itemize}

\subsection*{Acknowledgements}
We wish to thank Ayesha Asloob Qureshi for comments on an earlier version of this paper.

	\end{document}